\documentclass{amsart}

\usepackage{amsthm}
\usepackage{amsmath}
\usepackage{mathtools}

\usepackage[pdfusetitle]{hyperref}

\newcommand\C{\mathbb{C}}
\newcommand\R{\mathbb{R}}

\newcommand\eps{\varepsilon}

\newtheorem{lemma}{Lemma}
\newtheorem{proposition}[lemma]{Proposition}

\newtheorem{theorem}[lemma]{Theorem}
\newtheorem{remark}[lemma]{Remark}
\newtheorem{example}[lemma]{Example}

\begin{document}

\title[Eigenvectors from eigenvalues]{Eigenvectors from Eigenvalues: a survey of a basic identity in linear algebra}

\author{Peter B.~Denton}
\address{Department of Physics, Brookhaven National Laboratory, Upton, NY 11973, USA}
\email{pdenton@bnl.gov}
\thanks{PBD acknowledges the United States Department of Energy under Grant Contract desc0012704 and the Fermilab Neutrino Physics Center.}

\author{Stephen J.~Parke}
\address{Theoretical Physics Department, Fermi National Accelerator Laboratory, Batavia, IL 60510, USA}
\email{parke@fnal.gov}
\thanks{This manuscript has been authored by Fermi Research Alliance, LLC under Contract No.~DE-AC02-07CH11359 with the U.S.~Department of Energy, Office of Science, Office of High Energy Physics.
FERMILAB-PUB-19-377-T}

\author{Terence Tao}
\address{Department of Mathematics, UCLA, Los Angeles CA 90095-1555}
\email{tao@math.ucla.edu}
 \thanks{TT was supported by a Simons Investigator grant, the James and Carol Collins Chair, the Mathematical Analysis \& Application Research Fund Endowment, and by NSF grant DMS-1764034}

\author{Xining Zhang}
\address{Enrico Fermi Institute \& Department of Physics, University of Chicago, Chicago, IL 60637, USA}
\email{xining@uchicago.edu}

\begin{abstract}  If $A$ is an $n \times n$ Hermitian matrix with eigenvalues $\lambda_1(A),\dots,\lambda_n(A)$ and $i,j = 1,\dots,n$, then the $j^{\mathrm{th}}$ component $v_{i,j}$ of a unit eigenvector $v_i$ associated to the eigenvalue $\lambda_i(A)$ is related to the eigenvalues $\lambda_1(M_j),\dots,\lambda_{n-1}(M_j)$ of the minor $M_j$ of $A$ formed by removing the $j^{\mathrm{th}}$ row and column by the formula
$$ |v_{i,j}|^2\prod_{k=1;k\neq i}^{n}\left(\lambda_i(A)-\lambda_k(A)\right)=\prod_{k=1}^{n-1}\left(\lambda_i(A)-\lambda_k(M_j)\right)\,.$$
We refer to this identity as the \emph{eigenvector-eigenvalue identity} and show how this identity can  also be used to extract the relative phases between the components of any given eigenvector.
Despite the simple nature of this identity and the extremely mature state of development of linear algebra, this identity was not widely known until very recently.  In this survey we describe the many times that this identity, or variants thereof, have been discovered and rediscovered in the literature (with the earliest precursor we know of appearing in 1834).  We also provide a number of proofs and generalizations of the identity.  
\end{abstract}

\date{\today}

\maketitle

\section{Introduction}

If $A$ is an $n \times n$ Hermitian matrix, we denote its $n$ real eigenvalues by $\lambda_1(A),\dots,\lambda_n(A)$.  The ordering of the eigenvalues will not be of importance in this survey, but for sake of concreteness let us adopt the convention of non-decreasing eigenvalues:
$$ \lambda_1(A) \leq \dots \leq \lambda_n(A).$$
If $1 \leq j \leq n$, let $M_j$ denote the $n-1 \times n-1$ minor formed from $A$ by deleting the $j^{\mathrm{th}}$ row and column from $A$.  This is again a Hermitian matrix, and thus has $n-1$ real eigenvalues $\lambda_1(M_j),\dots,\lambda_{n-1}(M_j)$, which for sake of concreteness we again arrange in non-decreasing order.  In particular we have the well known \emph{Cauchy interlacing inequalities} (see e.g., \cite[p. 103-104]{wilkinson})
\begin{equation}\label{interlace}
 \lambda_i(A) \leq \lambda_i(M_j) \leq \lambda_{i+1}(A)
\end{equation}
for $i=1,\dots,n-1$.

By the spectral theorem, we can find an orthonormal basis of eigenvectors $v_1,\dots,v_n$ where the $v_i$ are in $\mathbb C_n$ of $A$ associated to the eigenvalues $\lambda_1(A),\dots,\lambda_n(A)$ respectively. For any $i,j=1,\dots,n$, let $v_{i,j}$ denote the $j^{\mathrm{th}}$ component of $v_i$.  This survey paper is devoted to the following elegant relation, which we will call the \emph{eigenvector-eigenvalue identity}, relating this eigenvector component to the eigenvalues of $A$ and $M_j$:

\begin{theorem}[Eigenvector-eigenvalue identity]\label{eei}  With the notation as above, we have
\begin{equation}
|v_{i,j}|^2\prod_{k=1;k\neq i}^{n}\left(\lambda_i(A)-\lambda_k(A)\right)=\prod_{k=1}^{n-1}\left(\lambda_i(A)-\lambda_k(M_j)\right)\,.
\label{eq:wts}
\end{equation}
\end{theorem}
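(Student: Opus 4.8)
The plan is to relate the characteristic polynomial of the minor $M_j$ to the full spectral data of $A$ through the adjugate (classical adjoint) matrix, and then read off \eqref{eq:wts} by specializing a resulting polynomial identity. Write $p_A(\lambda) = \det(\lambda I - A) = \prod_{k=1}^n (\lambda - \lambda_k(A))$ and $p_{M_j}(\lambda) = \det(\lambda I_{n-1} - M_j) = \prod_{k=1}^{n-1}(\lambda - \lambda_k(M_j))$ for the characteristic polynomials of $A$ and $M_j$. The first ingredient is the elementary observation, via cofactor expansion, that the $(j,j)$ entry of $\adj(\lambda I - A)$ equals the $(j,j)$ cofactor of $\lambda I - A$, which is exactly $\det(\lambda I_{n-1} - M_j) = p_{M_j}(\lambda)$, since deleting the $j^{\mathrm{th}}$ row and column from $\lambda I - A$ leaves $\lambda I_{n-1} - M_j$.

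The second ingredient is the spectral theorem. Fixing an orthonormal eigenbasis $v_1,\dots,v_n$, we may write $A = \sum_{k=1}^n \lambda_k(A)\, v_k v_k^*$ and $I = \sum_{k=1}^n v_k v_k^*$, so that $(\lambda I - A)^{-1} = \sum_{k=1}^n (\lambda - \lambda_k(A))^{-1} v_k v_k^*$ whenever $\lambda$ avoids the spectrum of $A$. Multiplying by $\det(\lambda I - A) = p_A(\lambda)$ and invoking $\adj(\lambda I - A) = \det(\lambda I - A)\,(\lambda I - A)^{-1}$ gives
\begin{equation*}
\adj(\lambda I - A) = \sum_{k=1}^n \Biggl( \prod_{\substack{m=1 \\ m \neq k}}^n (\lambda - \lambda_m(A)) \Biggr) v_k v_k^*.
\end{equation*}
Comparing $(j,j)$ entries of the two sides, and using $(v_k v_k^*)_{jj} = |v_{k,j}|^2$ together with the first ingredient, yields the polynomial identity
\begin{equation*}
p_{M_j}(\lambda) = \sum_{k=1}^n |v_{k,j}|^2 \prod_{\substack{m=1 \\ m \neq k}}^n (\lambda - \lambda_m(A)),
\end{equation*}
which holds a priori only for $\lambda$ outside the spectrum of $A$, but hence for all $\lambda$ since both sides are polynomials of degree $n-1$ agreeing at infinitely many points.

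To conclude, I would set $\lambda = \lambda_i(A)$: every term with $k \neq i$ in the sum acquires the factor $\lambda_i(A) - \lambda_i(A) = 0$ (take $m = i$ in the product), so only the $k = i$ term survives and the identity collapses to exactly \eqref{eq:wts}. I expect the only real subtlety to be bookkeeping around repeated eigenvalues: when $\lambda_i(A)$ is degenerate, $v_i$ is not uniquely determined, but one checks that within a degenerate eigenspace the contributions depend only on the orthogonal projection onto that eigenspace, so the displayed polynomial identity — and therefore \eqref{eq:wts} — is independent of the chosen eigenbasis; in that case both sides of \eqref{eq:wts} simply vanish, the left because of the repeated factor and the right as forced by the identity, consistently with the Cauchy interlacing inequalities \eqref{interlace}. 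Everything else is routine manipulation of $\adj(B)B = \det(B)\,I$ and of the spectral decomposition.
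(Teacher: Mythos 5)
Your proposal is correct and follows essentially the same route as the paper's adjugate proof in Section \ref{adjugate-sec}: both center on the formula $\adj(\lambda I - A) = \sum_{k=1}^n \bigl(\prod_{m\neq k}(\lambda - \lambda_m(A))\bigr) v_k v_k^*$, read off the $(j,j)$ entry, and collapse the sum at $\lambda = \lambda_i(A)$. The only cosmetic difference is that you reach the adjugate formula via the resolvent and a polynomial-continuation step, whereas the paper obtains it algebraically from the diagonal case, keeping the argument division-free so that it extends to diagonalizable matrices over commutative rings.
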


If one lets $p_A: \C \to \C$ denote the characteristic polynomial of $A$,
\begin{equation}\label{pa-def}
 p_A(\lambda) \coloneqq \mathrm{det}(\lambda I_n - A) = \prod_{k=1}^n (\lambda - \lambda_k(A)),
\end{equation}
where $I_n$ denotes the $n \times n$ identity matrix, and similarly let $p_{M_j}: \C \to \C$ denote the characteristic polynomial of $M_j$,
$$ p_{M_j}(\lambda) \coloneqq \mathrm{det}(\lambda I_{n-1} - M_j) = \prod_{k=1}^{n-1} (\lambda - \lambda_k(M_j))$$
then the derivative $p'_A(\lambda_i(A))$ of $p_A$ at $\lambda = \lambda_i(A)$ is equal to
$$ p'_A(\lambda_i(A)) = \prod_{k=1;k\neq i}^{n}\left(\lambda_i(A)-\lambda_k(A)\right)$$
and so \eqref{eq:wts} can be equivalently written in the characteristic polynomial form
\begin{equation}\label{eq:wts-alt}
|v_{i,j}|^2 p'_A(\lambda_i(A))  = p_{M_j}(\lambda_i(A)).
\end{equation}

\begin{example} If we set $n=3$ and
\begin{align*}
A &= \begin{pmatrix}
1 & 1 & -1 \\
1 & 3 & 1\\
-1 & 1 &  3
\end{pmatrix}
\end{align*}
then the eigenvectors and eigenvalues are
\begin{align*}
 v_1&= \frac{1}{\sqrt{6}}\begin{pmatrix}
2  \\ -1\\ 1
\end{pmatrix} ; \quad \lambda_1(A)=0 \\ 
 v_2&= \frac{1}{\sqrt{3}}  \begin{pmatrix}
1  \\ 1\\ -1
\end{pmatrix}; \quad \lambda_2(A)=3 \\
 v_3&= \frac{1}{\sqrt{2}}  \begin{pmatrix}
0  \\ 1\\ 1
\end{pmatrix}; \quad \lambda_3(A)=4 
\end{align*}
with minors $M_j$ and eigenvalues $\lambda_i(M_j)$ given by
\begin{align*}
M_1 &= \begin{pmatrix}
3  & 1 \\
1  & 3
\end{pmatrix}; \quad \quad \lambda_{1,2}(M_1)=2, ~4 \\
M_2 &= \begin{pmatrix}
1  & -1 \\
 -1 & 3
\end{pmatrix};  \quad \quad \lambda_{1,2}(M_2)=2\mp \sqrt{2} \approx 0.59, ~3.4  \\
M_3 &= \begin{pmatrix}
1  & 1 \\
 1 & 3
\end{pmatrix}; \quad \quad \lambda_{1,2}(M_3)=2\mp \sqrt{2}   \approx 0.59, ~3.4;
\end{align*}
one can observe the interlacing inequalities \eqref{interlace}.  One can then verify \eqref{eq:wts} for all $i,j=1,2,3$:
\begin{align*}
\frac{2}{3} = |v_{1,1}|^2 &= \frac{(0-2)(0-4)}{(0-3)(0-4)}\\
\frac{1}{6} = |v_{1,2}|^2 &= \frac{(0-2-\sqrt{2})(0-2+\sqrt{2})}{(0-3)(0-4)}\\
\frac{1}{6} = |v_{1,3}|^2 &= \frac{(0-2-\sqrt{2})(0-2+\sqrt{2})}{(0-3)(0-4)}\\[3mm]
\frac{1}{3} = |v_{2,1}|^2 &= \frac{(3-2)(3-4)}{(3-0)(3-4)} \\
\frac{1}{3} = |v_{2,2}|^2 &= \frac{(3-2-\sqrt{2})(3-2+\sqrt{2})}{(3-0)(3-4)} \\
\frac{1}{3} = |v_{2,3}|^2 &= \frac{(3-2-\sqrt{2})(3-2+\sqrt{2})}{(3-0)(3-4)} \\[3mm]
0 = |v_{3,1}|^2 &= \frac{(4-2)(4-4)}{(4-0)(4-3)} \\
\frac{1}{2} = |v_{3,2}|^2 &= \frac{(4-2-\sqrt{2})(4-2+\sqrt{2})}{(4-0)(4-3)}\\
\frac{1}{2} = |v_{3,3}|^2 &= \frac{(4-2-\sqrt{2})(4-2+\sqrt{2})}{(4-0)(4-3)}.
\end{align*}
One can also verify \eqref{eq:wts-alt} for this example after computing
\begin{align*}
p'_A(\lambda) &= 3\lambda^2-14\lambda+12 \\
p_{M_1}(\lambda) &= \lambda^2 - 6\lambda + 8 \\
p_{M_2}(\lambda) &= \lambda^2 - 4\lambda + 2 \\
p_{M_3}(\lambda) &= \lambda^2 - 4\lambda + 2.
\end{align*}
Note, that $p'_A(\lambda) =p_{M_1}(\lambda)+p_{M_2}(\lambda)+p_{M_3}(\lambda)$, which  is needed for the column normalization, see item \textup{(x)} in the consistency checks below.
\end{example}

Numerical code to verify the identity can be found at \cite{EEIcode}.

Theorem \ref{eei} passes a number of basic consistency checks:

\begin{itemize}
\item[(i)] (Dilation symmetry) If one multiplies the matrix $A$ by a real scalar $c$, then the eigenvalues of $A$ and $M_j$ also get multiplied by $c$, while the coefficients $v_{i,j}$ remain unchanged, which does not affect the truth of \eqref{eq:wts}.  To put it another way, if one assigns units to the entries of $A$, then the eigenvalues of $A,M_j$ acquire the same units, while $v_{i,j}$ remains dimensionless, and the identity \eqref{eq:wts} is dimensionally consistent.
\item[(ii)] (Translation symmetry) If one adds a scalar multiple of the identity $\lambda I_n$ to $A$, then the eigenvalues of $A$ and $M_j$ are shifted by $\lambda$, while the coefficient $v_{i,j}$ remains unchanged.  Thus both sides of \eqref{eq:wts} remain unaffected by such a transformation.
\item[(iii)]  (Permutation symmetry) Permuting the eigenvalues of $A$ or $M_j$ does not affect either side of \eqref{eq:wts} (provided one also permutes the index $i$ accordingly).  Permuting the ordering of the rows (and colums), as well as the index $j$, similarly has no effect on \eqref{eq:wts}.
\item[(iv)]  (First degenerate case) If $v_{i,j}$ vanishes, then the eigenvector $v_i$ for $A$ also becomes an eigenvector for $M_j$ with the same eigenvalue $\lambda_i(A)$ after deleting the $j^{\mathrm{th}}$ coefficient.  In this case, both sides of \eqref{eq:wts} vanish.  
\item[(v)]  (Second degenerate case) If the eigenvalue $\lambda_i(A)$ of $A$ occurs with multiplicity greater than one, then by the interlacing inequalities \eqref{interlace} it also occurs as an eigenvalue of $M_j$.  Again in this case, both sides of \eqref{eq:wts} vanish.
\item[(vi)]  (Compatibility with interlacing) More generally, the identity \eqref{eq:wts} is consistent with the interlacing \eqref{interlace} because the component $v_{i,j}$ of the unit eigenvector $v_i$ has magnitude at most $1$.
\item[(vii)]  (Phase symmetry) One has the freedom to multiply each eigenvector $v_i$ by an arbitrary complex phase $e^{\sqrt{-1}\theta_i}$ without affecting the matrix $A$ or its minors $M_j$.  But both sides of \eqref{eq:wts} remain unchanged when one does so.
\item[(viii)]  (Diagonal case) If $A$ is a diagonal matrix with diagonal entries $\lambda_1(A),\dots,\lambda_n(A)$, then $|v_{i,j}|$ equals $1$ when $i=j$ and zero otherwise, while the eigenvalues of $M_j$ are formed from those of $A$ by deleting one copy of $\lambda_i(A)$.  In this case one can easily verify \eqref{eq:wts} by hand.
\item[(ix)]  (Row normalization)  As the eigenvectors $v_1,\dots,v_n$ form the columns of an orthogonal matrix, one must have the identity $\sum_{i=1}^n |v_{i,j}|^2 = 1$ for all $j=1,\dots,n$.  Assuming for simplicity that the eigenvalues $\lambda_i(A)$ are distinct, this follows easily from the algebraic identity $\sum_{i=1}^n \frac{\lambda_i^m}{\prod_{k=1; k \neq i}^n (\lambda_i-\lambda_k)} = \delta_{m,n-1}$ for $m=0,\dots,n-1$ and any distinct complex numbers $\lambda_1,\dots,\lambda_n$, which can be seen by integrating the rational function $\frac{z^m}{\prod_{k=1}^n (z-\lambda_k)}$ along a large circle $\{|z|=R\}$ and applying the residue theorem. See also Remark \ref{reverse} below.
\item[(x)]  (Column normalization) As the eigenvectors $v_1,\dots,v_n$ are unit vectors, one must have $\sum_{j=1}^n |v_{i,j}|^2 = 1$ for all $i=1,\dots,n$.  To verify this, the translation symmetry (ii) to normalize $\lambda_i(A)=0$, and then observe (e.g., from \eqref{adja}) that $(-1)^n \sum_{j=1}^n p_{M_j}(0) = \sum_{j=1}^n \mathrm{det}(M_j) = \mathrm{tr}\ \mathrm{adj}(A)$ is the $(n-1)^{\mathrm{th}}$ elementary symmetric function of the eigenvalues and thus equal (since $\lambda_i(A)$ vanishes) to $\prod_{k=1;k \neq i}^n \lambda_k(A) = (-1)^n p'_A(0)$.  Comparing this with \eqref{eq:wts-alt} we obtain $\sum_{j=1}^n |v_{i,j}|^2=1$ as desired.  Alternatively, one can see from Jacobi's formula $\frac{d}{dt} \det(A(t)) = \mathrm{tr}( \mathrm{adj}(A(t)) \frac{dA(t)}{dt} )$ that $p'_A(\lambda) = \mathrm{tr}( \mathrm{adj}(\lambda I_n - A) ) =\sum_j p_{M_j}(\lambda)$ which when combined with \eqref{eq:wts-alt} also recovers the identity $\sum_{j=1}^n |v_{i,j}|^2=1$.  Jacobi's formula give us the need relationships between the eigenvalues of A and the eigenvalues of the $M_j$'s. They are $ (n-k) S_k(A) = \sum^{n}_{j=1} S_k(M_j) $ for $k=1,\dots,n-1$. where $S_k(A)$ is the k-th elementary symmetric polynomial of the eigenvalues of A, e.g.~$S_1(A)=\mathrm{tr}(A), \cdots, S_n(A)=\mathrm{det}(A)$.
\item[(xi)] (Relative phase information) As mentioned in (vii) above, the phase of any individual eigenvector $v_{i}$ is arbitrary, therefore the relative phase between  $v_{i,k}$ and $v_{j,k}$,  $i \neq j$, is arbitrary.  However, the relative phases between  the components of any $v_i$, say between $v_{i,j}$ and $v_{i,k}$ for $j \neq k$, is not arbitrary.  \eqref{eq:wts} can be used to extract these relative phases as follows: consider a unitary transformation on the matrix A and its eigenvectors such that $v_{i,j} \rightarrow \frac{1}{\sqrt{2}}(v_{i,j} + \omega \, v_{i,k})$ and $v_{i,k} \rightarrow \frac{1}{\sqrt{2}}(v_{i,k} -\omega^* \, v_{i,j})$ with $\omega =1~ \text{or}  ~\sqrt{-1}$ where $\omega^*$ is the complex conjugate of $\omega$. Applying \eqref{eq:wts} to the original A and to the two unitary transformed A's, gives us the information need to extract $\mathrm{arg}(v_{i,j}v^*_{i,k})$.  Note $p_{M_j}(\lambda)$ and  $p_{M_k}(\lambda)$ are not invariant under this particular unitary transformation, but $p^\prime_A(\lambda)$ and other $p_{M_l}(\lambda)$, $l \neq j ~\text{or} ~k$,  are invariant. Further more the  unitarity condition that  the $\sum_{i=1}^n  v_{i,j}v^*_{i,k} =0$ for $j \neq k$,  can also be derived in this fashion.  
 For further discussion on the relative phases see beginning of Section \ref{discuss-sec}.
\end{itemize}

We also note that, since \eqref{eq:wts} is a continuous function of the matrix $A$, it is possible to treat all eigenvalues as simple via the usual limiting argument.

The eigenvector-eigenvalue identity has a surprisingly complicated history in the literature, having appeared in some form or another (albeit often in a lightly disguised form) in over two dozen references, and being independently rediscovered a half-dozen times, in fields as diverse as numerical linear algebra, random matrix theory, inverse eigenvalue problems, graph theory (including chemical graph theory, graph reconstruction, and walks on graphs), and neutrino physics; see Figure \ref{fig:graph}.  While the identity applies to all Hermitian matrices, and extends in fact to normal matrices and more generally to diagonalizable matrices, it has found particular application in the special case of symmetric tridiagonal matrices (such as Jacobi matrices), which are of particular importance in several fundamental algorithms in numerical linear algebra.  

While the eigenvector-eigenvalue identity is moderately familiar to some mathematical communities, it is not as broadly well known as other standard identities in linear algebra such as Cramer's rule \cite{cramer} or the Cauchy determinant formula \cite{cauchy} (though, as we shall shortly see, it can be readily derived from either of these identities).  While several of the papers in which some form of the identity was discovered went on to be cited several times by subsequent work, the citation graph is only very weakly connected; in particular, Figure \ref{fig:graph} reveals that many of the citations coming from the earliest work on the identity did not propagate to later works, which instead were based on independent rediscoveries of the identity (or one of its variants).  In many cases, the identity was not highlighted as a relation between eigenvectors and eigenvalues, but was instead introduced in passing as a tool to establish some other application; also, the form of the identity and the notation used varied widely from appearance to appearance, making it difficult to search for occurrences of the identity by standard search engines.  The situation changed after a popular science article \cite{wolchover-2019} reporting on the most recent rediscovery \cite{Denton:2019ovn, DPTZ} of the identity by ourselves; in the wake of the publicity generated by that article, we received many notifications (see Section \ref{ack}) of the disparate places in the literature where the eigenvector-eigenvalue identity, or an identity closely related to it, was discovered.  Effectively, this crowdsourced the task of collating all these references together.  In this paper, we survey all the appearances of the eigenvector-eigenvalue identity that we are aware of as a consequence of these efforts, as well as provide several proofs, generalizations, and applications of the identity.  Finally, we speculate on some reasons for the limited nature of the dissemination of this identity in prior literature. 

\begin{figure} [t]
\centering
\includegraphics[width=4.5in]{./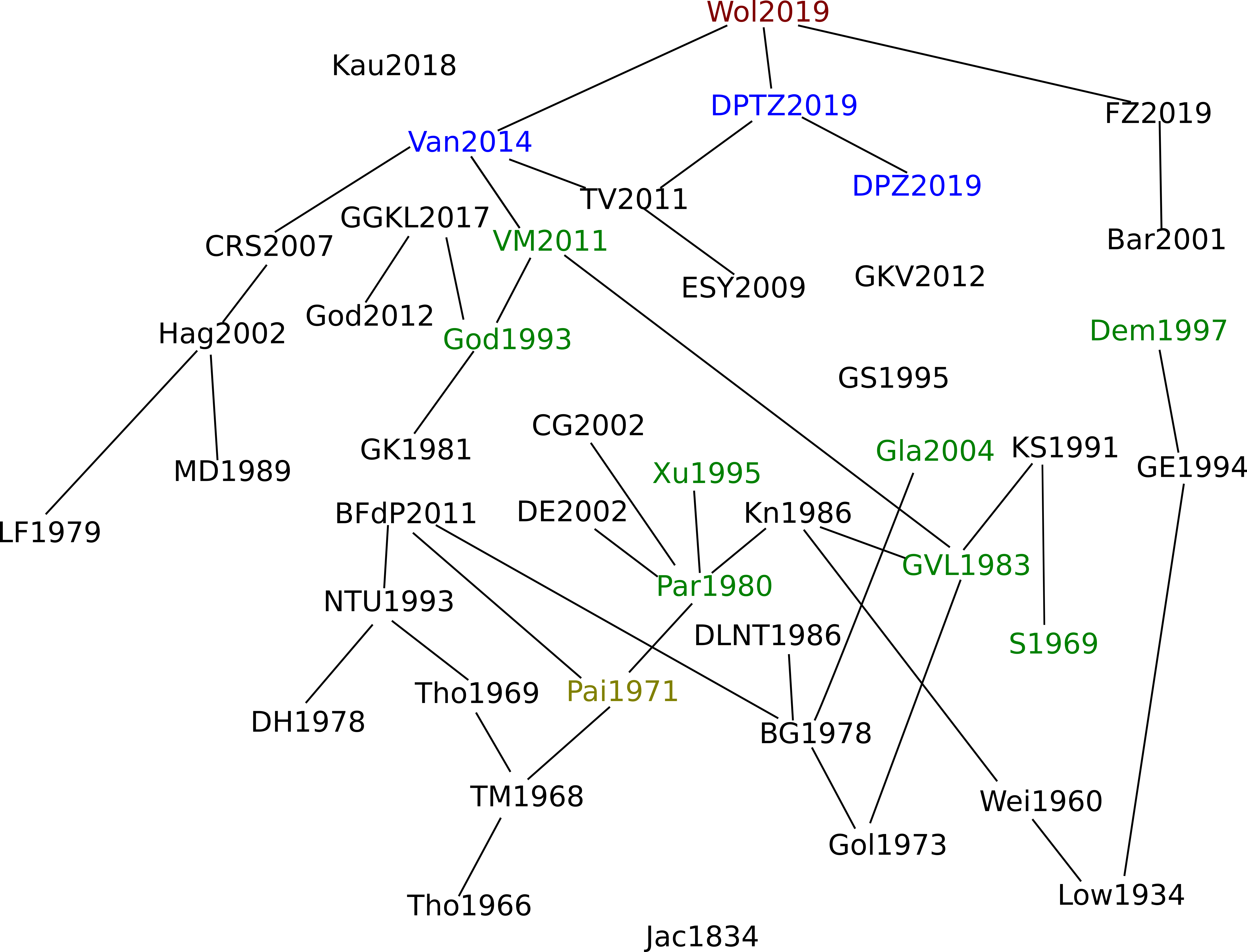}
\caption{The citation graph of all the references in the literature we are aware of (predating the current survey) that mention some variant of the eigenvector-eigenvalue identity.  To reduce clutter, transitive references (e.g., a citation of a paper already cited by another paper in the bibliography) are omitted.  Note the very weakly connected nature of the graph, with many early initial references not being (transitively) cited by many of the more recent references.  Blue references are preprints, green references are books, the brown reference is a thesis, and the red reference is a popular science article.  This graph was mostly crowdsourced from feedback received by the authors after the publication of \cite{wolchover-2019}.  The reference \cite{jacobi} predates all others found by a century!}
\label{fig:graph}
\end{figure}

\section{Proofs of the identity}

The identity \eqref{eq:wts} can be readily established from existing standard identities in the linear algebra literature.  We now give several such proofs.

\subsection{The adjugate proof}\label{adjugate-sec}

We first give a proof using adjugate matrices, which is a purely ``polynomial'' proof that avoids any invertibility, division, or non-degeneracy hypotheses in the argument; in particular, as we remark below, it has an extension to (diagonalizable) matrices that take values in arbitrary commutative rings.  This argument appears for instance in \cite[Section 7.9]{Parlett}.

Recall that if $A$ is an $n \times n$ matrix, the \emph{adjugate matrix} $\mathrm{adj}(A)$ is given by the formula
\begin{equation}\label{adj}
 \mathrm{adj}( A ) \coloneqq \left( (-1)^{i+j} \mathrm{det}(M_{ji}) \right)_{1 \leq i, j \leq n}
\end{equation}
where $M_{ji}$ is the $n-1 \times n-1$ matrix formed by deleting the $j^{\mathrm{th}}$ row and $i^{\mathrm{th}}$ column from $A$.  From Cramer's rule we have the identity
$$ \mathrm{adj}( A ) A = A \mathrm{adj}( A ) = \mathrm{det}(A) I_n.$$
If $A$ is a diagonal matrix with (complex) entries $\lambda_1(A),\dots,\lambda_n(A)$, then $\mathrm{adj}(A)$ is also a diagonal matrix with $ii$ entry $\prod_{k=1; k \neq i}^n \lambda_k(A)$.  More generally, if $A$ is a normal matrix with diagonalization
\begin{equation}\label{adiag}
 A = \sum_{i=1}^n \lambda_i(A) v_i v_i^*
\end{equation}
where $v_1,\dots,v_n$ are an orthonormal basis of eigenvectors of $A$ and $v_i^*$ is the conjugate transpose of $v_i$, then $\mathrm{adj}(A)$ has the same basis of eigenvectors with diagonalization
\begin{equation}\label{adja}
 \mathrm{adj}(A) = \sum_{i=1}^n (\prod_{k=1; k \neq i}^n \lambda_k(A)) v_i v_i^*.
\end{equation}
If one replaces $A$ by $\lambda I_n - A$ for any complex number $\lambda$, we therefore have\footnote{To our knowledge, this identity first appears in \cite[p. 157]{halmos}.}
$$ \mathrm{adj}(\lambda I_n - A) = \sum_{i=1}^n (\prod_{k=1; k \neq i}^n (\lambda - \lambda_k(A)) ) v_i v_i^*.$$
If one specializes to the case $\lambda = \lambda_i(A)$ for some $i=1,\dots,n$, then all but one of the summands on the right-hand side vanish, and the adjugate matrix becomes a scalar multiple of the rank one projection $v_i v_i^*$:
\begin{equation}\label{adji}
\mathrm{adj}(\lambda_i(A) I_n - A) = (\prod_{k=1; k \neq i}^n (\lambda_i(A) - \lambda_k(A)) ) v_i v_i^*.
\end{equation}
Extracting out the $jj$ component of this identity using \eqref{adj}, we conclude that
\begin{equation}\label{adji-2}
 \mathrm{det}(\lambda_i(A) I_{n-1} - M_j) = (\prod_{k=1; k \neq i}^n (\lambda_i(A) - \lambda_k(A)) ) |v_{i,j}|^2
\end{equation}
which is equivalent to \eqref{eq:wts}.  In fact this shows that the eigenvector-eigenvalue identity holds for normal matrices $A$ as well as Hermitian matrices (despite the fact that the minor $M_j$ need not be Hermitian or normal in this case).  Of course in this case the eigenvalues are not necessarily real and thus cannot be arranged in increasing order, but the order of the eigenvalues plays no role in the identity \eqref{eq:wts}.

\begin{remark}
The same argument also yields an off-diagonal variant
\begin{equation}\label{off-diag}
 (-1)^{j+j'} \mathrm{det}(\lambda_i(A) (I_n)_{j'j} - M_{j'j}) = (\prod_{k=1; k \neq i}^n (\lambda_i(A) - \lambda_k(A)) ) v_{i,j} \overline{v_{i,j'}}
\end{equation}
for any $1 \leq j,j' \leq n$, where $(I_n)_{j'j}$ is the $n-1 \times n-1$ minor of the identity matrix $I_n$.  When $j=j'$, this minor $(I_n)_{j'j}$ is simply equal to $I_{n-1}$ and the determinant can be expressed in terms of the eigenvalues of the minor $M_j$; however when $j \neq j'$ there is no obvious way to express the left-hand side of \eqref{off-diag} in terms of eigenvalues of $M_{j'j}$ (though one can still of course write the determinant as the product of the eigenvalues of $\lambda_i(A) (I_n)_{j'j} - M_{j'j}$).  Another way of viewing \eqref{off-diag} is that for every $1 \leq j' \leq n$, the vector with $j^{\mathrm{th}}$ entry
$$ (-1)^{j} \mathrm{det}(\lambda_i(A) (I_n)_{j'j} - M_{j'j}) $$
is a non-normalized eigenvector associated to the eigenvalue $\lambda_i(A)$; this observation appears for instance in \cite[p. 85--86]{gantmacher}.  See \cite{2014arXiv1401.4580V} for some further identities relating the components $v_{i,j}$ of the eigenvector $v_i$ to various determinants.
\end{remark}

\begin{remark} This remark is due to Vassilis Papanicolaou\footnote{\tt terrytao.wordpress.com/2019/08/13/eigenvectors-from-eigenvalues/\#comment-519905}.  The above argument also applies to non-normal matrices $A$, so long as they are diagonalizable with some eigenvalues $\lambda_1(A),\dots,\lambda_n(A)$ (not necessarily real or distinct).  Indeed, if one lets $v_1,\dots,v_n$ be a basis of right eigenvectors of $A$ (so that $A v_i = \lambda_i(A) v_i$ for all $i=1,\dots,n$), and let $w_1,\dots,w_n$ be the corresponding dual basis\footnote{In the case when $A$ is a normal matrix and the $v_i$ are unit eigenvectors, the dual eigenvector $w_i$ would be the complex conjugate of $v_i$.} of left eigenvectors (so $w_i^T A = w_i^T \lambda_i(A)$, and $w_i^T v_j$ is equal to $1$ when $i=j$ and $0$ otherwise) then one has the diagonalization
$$ A = \sum_{i=1}^n \lambda_i(A) v_i w_i^T$$
and one can generalize \eqref{adji} to
\begin{equation}\label{lia}
 \mathrm{adj}(\lambda_i(A) I_n - A) = (\prod_{k=1; k \neq i}^n (\lambda_i(A) - \lambda_k(A)) ) v_i w_i^T
\end{equation}
leading to an extension
\begin{equation}\label{adji-3}
 \mathrm{det}(\lambda_i(A) I_{n-1} - M_j) = (\prod_{k=1; k \neq i}^n (\lambda_i(A) - \lambda_k(A)) ) v_{i,j} w_{i,j}
\end{equation}
of \eqref{adji-2} to arbitrary diagonalizable matrices.  We remark that this argument shows that the identity \eqref{adji-3} is in fact valid for any diagonalizable matrix taking values in any commutative ring (not just the complex numbers).  The identity \eqref{off-diag} may be generalized in a similar fashion; we leave the details to the interested reader.  
\end{remark}

\begin{remark}\label{grin} As pointed out to us by Darij Grinberg\footnote{{\tt terrytao.wordpress.com/2019/12/03}}, the identity \eqref{eq:wts-alt} may be generalized to the non-diagonalizable setting.  Namely, one can prove that
$$ v_j w_j p'_A(\lambda) = (w^T v) p_{M_j}(\lambda)$$
for an arbitrary $n \times n$ matrix $A$ (with entries in an arbitrary commutative ring), any $j=1,\dots,n$ (with $M_j$ the $n-1 \times n-1$ minor formed from $A$ by removing the $j^{\mathrm{th}}$ row and column, and any right-eigenvector $Av = \lambda v$ and left-eigenvector $w^T A = w^T \lambda$ with a common eigenvalue $\lambda$.  After reducing to the case $\lambda=0$, the main step in the proof is to establishing two variants of \eqref{adji}, namely that $w_j (\mathrm{adj} A)_{i,k} = w_k (\mathrm{adj} A)_{i,j}$ and $(\mathrm{adj} A)_{k,i} v_j = (\mathrm{adj} A)_{j,i} v_k$ for all $i,j,k=1,\dots,n$. We refer the reader to the comment of Grinberg for further details.
\end{remark}

\begin{remark}  As observed in \cite[Appendix A]{2014arXiv1401.4580V}, one can obtain an equivalent identity to \eqref{eq:wts} by working with the powers $A^m, m=0,\dots,n-1$ in place of $\mathrm{adj}(A)$.  Indeed, from \eqref{adiag} we have
$$ A^m = \sum_{i=1}^n \lambda_i(A)^m v_i v_i^*$$
and hence on extracting the $jj$ component
$$ (A^m)_{jj} = \sum_{i=1}^n \lambda_i(A)^m |v_{i,j}|^2$$
for all $j=1,\dots,n$ and $m=0,\dots,n-1$.  Using Vandermonde determinants (and assuming for sake of argument that the eigenvalues $\lambda_i(A)$ are distinct), one can then solve for the $|v_{i,j}|^2$ in terms of the $(A^m)_{jj}$, eventually reaching an identity \cite[Theorem 2]{2014arXiv1401.4580V} equivalent to \eqref{eq:wts} (or \eqref{off-diag}), which in the case when $A$ is the adjacency matrix of a graph can also be expressed in terms of counts of walks of various lengths between pairs of vertices. We refer the reader to \cite{2014arXiv1401.4580V} for further details.
\end{remark}

\subsection{The Cramer's rule proof}\label{cramer-sec}

Returning now to the case of Hermitian matrices, we give a variant of the above proof of \eqref{eq:wts} that still relies primarily on Cramer's rule, but makes no explicit mention of the adjugate matrix; as discussed in the next section, variants of this argument have appeared multiple times in the literature.  We first observe that to prove \eqref{eq:wts} for Hermitian matrices $A$, it suffices to do so under the additional hypothesis that $A$ has simple spectrum (all eigenvalues occur with multiplicity one), or equivalently that
$$ \lambda_1(A) < \lambda_2(A) < \dots < \lambda_n(A).$$
This is because any Hermitian matrix with repeated eigenvalues can be approximated to arbitrary accuracy by a Hermitian matrix with simple spectrum, and both sides of \eqref{eq:wts} vary continuously with $A$ (at least if we avoid the case when $\lambda_i(A)$ occurs with multiplicity greater than one, which is easy to handle anyway by the second degenerate case (iv) noted in the introduction).

As before, we diagonalize $A$ in the form \eqref{adiag}.  For any complex parameter $\lambda$ not equal to one of the eigenvalues $\lambda_i(A)$, The resolvent $(\lambda I_n - A)^{-1}$ can then also be diagonalized as
\begin{equation}\label{res}
 (\lambda I_n - A)^{-1} = \sum_{i=1}^n \frac{v_i v_i^*}{\lambda - \lambda_i(A)}.
\end{equation}
Extracting out the $jj$ component of this matrix identity using Cramer's rule \cite{cramer}, we conclude that
$$ \frac{\mathrm{det}(\lambda I_{n-1} - M_j)}{\mathrm{det}(\lambda I_n - A)} = \sum_{i=1}^n \frac{|v_{i,j}|^2}{\lambda - \lambda_i(A)}$$
which we can express in terms of eigenvalues as
\begin{equation}\label{lkmj}
 \frac{\prod_{k=1}^{n-1} (\lambda - \lambda_k(M_j))}{\prod_{k=1}^n (\lambda - \lambda_k(A))} = \sum_{i=1}^n \frac{|v_{i,j}|^2}{\lambda - \lambda_i(A)}.
\end{equation}
Both sides of this identity are rational functions in $\lambda$, and have a pole at $\lambda = \lambda_i(A)$ for any given $i=1,\dots,n$.  Extracting the residue at this pole, we conclude that
$$ \frac{\prod_{k=1}^{n-1} (\lambda_i(A) - \lambda_k(M_j))}{\prod_{k=1: k \neq i}^n (\lambda_i(A) - \lambda_k(A))} = |v_{i,j}|^2$$
which rearranges to give \eqref{eq:wts}.

\begin{remark}  One can view the above derivation of \eqref{eq:wts} from \eqref{lkmj} as a special case of the partial fractions decomposition
$$ \frac{P(t)}{Q(t)} = \sum_{Q(\alpha)=0} \frac{P(\alpha)}{Q'(\alpha)} \frac{1}{t-\alpha}$$
whenever $Q$ is a polynomial with distinct roots $\alpha_1,\dots,\alpha_n$, and $P$ is a polynomial of degree less than that of $Q$.  Equivalently, this derivation can be viewed as a special case of the Lagrange interpolation formula (see e.g., \cite[\S 25.2]{AS})
$$ P(t) = \sum_{i=1}^n P(\alpha_i) \prod_{j=1; j \neq i}^n \frac{t - \alpha_j}{\alpha_i-\alpha_j} $$
whenever $\alpha_1,\dots,\alpha_n$ are distinct and $P$ is a polynomial of degree less than $n$.
\end{remark}

\begin{remark} A slight variant of this proof was observed by Aram Harrow\footnote{{\tt twitter.com/quantum\_aram/status/1195185551667847170}}, inspired by the inverse power method for approximately computing eigenvectors numerically.  We again assume simple spectrum. Using the translation invariance noted in consistency check (ii) of the introduction, we may assume without loss of generality that $\lambda_i(A)=0$.  Applying the resolvent identity \eqref{res} with $\lambda$ equal to a small non-zero quantity $\eps$, we conclude that
\begin{equation}\label{epsin}
 (\eps I_n - A)^{-1} = \frac{v_i v_i^*}{\eps} + O(1).
\end{equation}
On the other hand, by Cramer's rule, the $jj$ component of the left-hand side is 
$$ \frac{\mathrm{det}(\eps I_{n-1} - M_j)}{\mathrm{det}(\eps I_n - A)} = \frac{p_{M_j}(\eps)}{p_A(\eps)} = \frac{p_{M_j}(0) + O(\eps)}{\eps p'_A(\eps) + O(\eps^2)}.$$
Extracting out the top order terms in $\eps$, one obtains \eqref{eq:wts-alt} and hence \eqref{eq:wts}.  A variant of this argument also gives the more general identity
\begin{equation}\label{lam}
 \sum_{i: \lambda_i(A) = \lambda_*} |v_{i,j}|^2 = \lim_{\lambda \to \lambda_*} \frac{p_{M_j}(\lambda) (\lambda - \lambda_*)}{p_A(\lambda)}
\end{equation}
whenever $\lambda_*$ is an eigenvalue of $A$ of some multiplicity $m \geq 1$.  Note when $m=1$ we can recover \eqref{eq:wts-alt} thanks to L'H\^{o}pital's rule.  The right-hand side of \eqref{lam} can also be interpreted as the residue of the rational function $p_{M_j}/p_A$ at $\lambda_*$.
\end{remark}

An alternate approach way to arrive at \eqref{eq:wts} from \eqref{lkmj} is as follows.  Assume for the sake of this argument that the eigenvalues of $M_j$ are all distinct from the eigenvalues of $A$.  Then we can substitute $\lambda = \lambda_k(M_j)$ in \eqref{lkmj} and conclude that
\begin{equation}\label{la}
\sum_{i=1}^n \frac{|v_{i,j}|^2}{\lambda_k(M_j) - \lambda_i(A)} = 0
\end{equation}
for $k=1,\dots,n-1$.  Also, since the $v_i$ form an orthonormal basis, we have from expressing the unit vector $e_j$ in this basis that
\begin{equation}\label{vij}
 \sum_{i=1}^n |v_{i,j}|^2 = 1
\end{equation}
This is a system of $n$ linear equations in $n$ unknowns $|v_{i,j}|^2$.  For sake of notation let use permutation symmetry to set $i=n$.  From a further application of Cramer's rule, one can then write
$$ |v_{n,j}|^2 = \frac{\mathrm{det}(S')}{\mathrm{det}(S)}$$
where $S$ is the $n \times n$ matrix with $ki$ entry equal to $\frac{1}{\lambda_k(M_j) - \lambda_i(A)}$ when $k=1,\dots,n-1$, and equal to $1$ when $k=n$, and $S'$ is the minor of $S$ formed by removing the $n^{\mathrm{th}}$ row and column.  Using the well known Cauchy determinant identity \cite{cauchy}
\begin{equation}\label{cauchy-det}
\mathrm{det} \left( \frac{1}{x_i-y_j} \right)_{1 \leq i,j \leq n} = \frac{\prod_{1 \leq j < i \leq n} (x_i-x_j)(y_i-y_j)}{\prod_{i=1}^n \prod_{j=1}^n (x_i-y_j)}
\end{equation}
and inspecting the asymptotics as $x_n \to \infty$, we soon arrive at the identities
$$ \mathrm{det}(S) = \frac{ \prod_{1 \leq l < k \leq n-1} (\lambda_k(M_j) - \lambda_l(M_j)) \prod_{1 \leq l < k \leq n} (\lambda_k(A) - \lambda_l(A))}{\prod_{l=1}^{n-1} \prod_{k=1}^n (\lambda_l(M_j) - \lambda_k(A))}$$
and
$$ \mathrm{det}(S') = \frac{ \prod_{1 \leq l < k \leq n-1} (\lambda_k(M_j) - \lambda_l(M_j)) (\lambda_k(A) - \lambda_l(A))}{\prod_{l=1}^{n-1} \prod_{k=1}^{n-1} (\lambda_l(M_j) - \lambda_k(A))}$$
and the identity \eqref{eq:wts} then follows after a brief calculation.

\begin{remark}\label{reverse} The derivation of the eigenvector-eigenvalue identity \eqref{eq:wts} from \eqref{la}, as well as the obvious normalization \eqref{vij}, is reversible.  Indeed, the identity \eqref{eq:wts} implies that the rational functions on both sides of \eqref{lkmj} have the same residues at each of their (simple) poles, and these functions decay to zero at infinity, hence they must agree by Liouville's theorem.  Specializing \eqref{lkmj} to $\lambda = \lambda_k(M_j)$ then recovers \eqref{la}, while comparing the leading asymptotics of both sides of \eqref{lkmj} as $\lambda \to \infty$ recovers \eqref{vij} (note this also establishes the consistency check (ix) from the introduction).  As the identity \eqref{la} involves the same quantities $v_{i,j}, \lambda_k(M_j), \lambda_i(A)$ as \eqref{eq:wts}, one can thus view \eqref{la} as an equivalent formulation of the eigenvector-eigenvalue identity, at least in the case when all the eigenvalues of $A$ are distinct; the identity \eqref{lkmj} (viewing $\lambda$ as a free parameter) can also be interpreted in this fashion.
\end{remark}

\begin{remark} The above resolvent-based arguments have a good chance of being extended to certain classes of infinite matrices (e.g., Jacobi matrices), or other Hermitian operators, particularly if they have good spectral properties (e.g., they are trace class).  Certainly it is well known that spectral projections of an operator to a single eigenvalue $\lambda$ can often be viewed as residues of the resolvent at that eigenvalue, in the spirit of \eqref{epsin}, under various spectral hypotheses of the operator in question.  The main difficulty is to find a suitable extension of Cramer's rule to infinite-dimensional settings, which would presumably require some sort of regularized determinant such as the Fredholm determinant.  We will not explore this question further here, however, as pointed out to us by Carlos Tomei (personal communication), for reasonable Hermitian infinite matrices $A$ such as Jacobi matrices, one can formulate an identity similar to \eqref{lkmj} for the upper left coefficient $\langle e_1, (\lambda - A) e_1 \rangle$ of the resolvent for $\lambda$ in the upper half-plane, which can for instance be used (in conjunction with the Herglotz representation theorem) to recover the spectral theorem for such matrices.
\end{remark}

\subsection{Coordinate-free proof}\label{cfree-sec}

We now give a proof that largely avoids the use of coordinates or matrices, essentially due to Bo Berndtsson\footnote{\tt terrytao.wordpress.com/2019/08/13/eigenvectors-from-eigenvalues/\#comment-519914}.  For this proof we assume familiarity with exterior algebra (see e.g., \cite[Chapter XVI]{BM}).  The key identity is the following statement.

\begin{lemma}[Coordinate-free eigenvector-eigenvalue identity]\label{cfeei}  Let $T: \C^n \to \C^n$ be a self-adjoint linear map that annihilates a unit vector $v$.  For each unit vector $f \in \C^n$, let $\Delta_T(f)$ be the determinant of the quadratic form $w \mapsto (Tw, w)_{\C^n}$ restricted to the orthogonal complement $f^\perp \coloneqq \{ w \in \C^n: (f,w)_{\C^n} = 0 \}$, where $(,)_{\C^n}$ denotes the Hermitian inner product on $\C^n$.  Then one has
\begin{equation}\label{vf}
 |(v,f)_{\C^n}|^2 \Delta_T(v) = \Delta_T(f)
\end{equation}
for all unit vectors $f \in \C^n$.
\end{lemma}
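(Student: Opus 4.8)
The plan is to prove Lemma~\ref{cfeei} via exterior algebra, by analysing the induced endomorphism $\Lambda^{n-1}T$ of $\Lambda^{n-1}\C^n$. I would first recall that the Hermitian inner product on $\C^n$ induces one on each exterior power, characterised on decomposables by $\langle a_1\wedge\cdots\wedge a_k,\ b_1\wedge\cdots\wedge b_k\rangle = \det\big((a_i,b_j)_{\C^n}\big)_{1\le i,j\le k}$. For a unit vector $f$, the line $\Lambda^{n-1}(f^\perp)\subset\Lambda^{n-1}\C^n$ has a unit generator $\eta_f$, well defined up to a scalar of modulus $1$. My first observation would be that
\[
\Delta_T(f)=\big\langle (\Lambda^{n-1}T)\,\eta_f,\ \eta_f\big\rangle:
\]
indeed, taking $\eta_f=e_1\wedge\cdots\wedge e_{n-1}$ for an orthonormal basis $e_1,\dots,e_{n-1}$ of $f^\perp$, both sides equal $\det\big((Te_i,e_j)_{\C^n}\big)_{i,j}$, the left side by definition of the determinant of the Hermitian form $w\mapsto(Tw,w)_{\C^n}$ on $f^\perp$ and the right side by the formula just recalled; this is manifestly independent of the phase chosen for $\eta_f$. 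Since $T$ is self-adjoint, so is $\Lambda^{n-1}T$.

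The heart of the argument would be to pin down $\Lambda^{n-1}T$ using the hypothesis $Tv=0$. Two facts: (a) for every $w\in\C^n$ one has $(Tw,v)_{\C^n}=(w,Tv)_{\C^n}=0$, so $T$ maps $\C^n$ into $v^\perp$, whence $\Lambda^{n-1}T$ maps $\Lambda^{n-1}\C^n$ into $\Lambda^{n-1}(v^\perp)=\C\eta_v$; (b) $\Lambda^{n-1}T$ annihilates $v\wedge\Lambda^{n-2}\C^n$, since $Tv\wedge(\cdots)=0$. Because $\Lambda^{n-1}\C^n=\C\eta_v\oplus(v\wedge\Lambda^{n-2}\C^n)$ is an orthogonal direct sum — in an orthonormal basis $\{v\}\cup\{u_i\}$ adapted to $v^\perp$, $\eta_v$ is the unique basis $(n-1)$-vector of $\Lambda^{n-1}\C^n$ omitting $v$, and all others lie in $v\wedge\Lambda^{n-2}\C^n$ — facts (a) and (b) force $\Lambda^{n-1}T$ to be the self-adjoint operator of rank at most one $\Delta_T(v)\,\eta_v\eta_v^\ast$, where the scalar is read off by applying both sides to $\eta_v$ and using $\|\eta_v\|=1$. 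Substituting into the displayed formula yields $\Delta_T(f)=\Delta_T(v)\,|\langle\eta_f,\eta_v\rangle|^2$.

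It would then remain to identify $|\langle\eta_f,\eta_v\rangle|$ with $|(v,f)_{\C^n}|$. Applying the orthogonal decomposition above to $\xi=\eta_f$ and wedging with $v$ (which kills the $v\wedge\Lambda^{n-2}\C^n$ component) gives $v\wedge\eta_f=\langle\eta_f,\eta_v\rangle\,(v\wedge\eta_v)$; on the other hand, writing $v=(v,f)_{\C^n}f+v'$ with $v'\in f^\perp$ and observing $v'\wedge\eta_f=0$ gives $v\wedge\eta_f=(v,f)_{\C^n}\,(f\wedge\eta_f)$. Since $\eta_v$ (resp.\ $\eta_f$) is a wedge of an orthonormal basis of $v^\perp$ (resp.\ $f^\perp$), the $n$-vectors $v\wedge\eta_v$ and $f\wedge\eta_f$ are unit vectors of $\Lambda^n\C^n$; comparing norms in $\Lambda^n\C^n$ in the two expressions for $v\wedge\eta_f$ gives $|\langle\eta_f,\eta_v\rangle|=|(v,f)_{\C^n}|$, and \eqref{vf} follows.

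The main obstacle, conceptually, is the middle step: recognising that $Tv=0$ together with self-adjointness makes $\Lambda^{n-1}T$ have rank at most one, supported precisely on the line $\C\eta_v$. Once that normal form is in hand the rest is bookkeeping. Two minor points to keep track of: the phase ambiguity in the $\eta_f$'s is harmless since these vectors enter only through $|\cdot|^2$-type expressions; and the degenerate case $\Delta_T(v)=0$ (equivalently $\Lambda^{n-1}T=0$) needs no separate treatment, as the rank-one normal form already covers it, with both sides of \eqref{vf} then vanishing.
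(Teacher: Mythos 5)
Your proof is correct. It rests on the same core observation as the paper's -- that $Tv=0$ together with self-adjointness forces the induced operator $\Lambda^{n-1}T$ to be supported on the line $\Lambda^{n-1}(v^\perp)=\C\eta_v$ -- but the organization differs genuinely. You make this explicit as a \emph{rank-one normal form} $\Lambda^{n-1}T=\Delta_T(v)\,\eta_v\eta_v^*$, from which $\Delta_T(f)=\Delta_T(v)\,|\langle\eta_f,\eta_v\rangle|^2$ drops out, and you then need the separate identification $|\langle\eta_f,\eta_v\rangle|=|(v,f)_{\C^n}|$, which you handle cleanly by comparing the two expansions of $v\wedge\eta_f$ in the one-dimensional space $\Lambda^n\C^n$. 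The paper instead works with Hodge duals $*f$ and proves the bilinear form of the identity, $(f,v)\Delta_T(v)(v,g)=(T(*f),*g)$, by checking the three special cases $f\perp v$, $g\perp v$, and $f=g=v$, and then invoking sesquilinearity in $(f,g)$; this polarization step silently absorbs the $|\langle\eta_f,\eta_v\rangle|=|(v,f)|$ computation that you carry out by hand. Your version is somewhat longer but more explicit about what $\Lambda^{n-1}T$ actually is; the paper's is shorter by outsourcing the bookkeeping to sesquilinearity. One small remark on your write-up: after establishing (a) image in $\C\eta_v$ and (b) kernel $\supset(\C\eta_v)^\perp$, the rank-one form already follows by pure linear algebra, so the self-adjointness of $\Lambda^{n-1}T$ is not actually needed at that point (it was already used, essentially, to get (a)); mentioning it there is harmless but slightly misleading about where the hypothesis enters.
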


\begin{proof} The determinant of a quadratic form $w \mapsto (Tw,w)_{\C^n}$ on a $k$-dimensional subspace $V$ of $\C^n$ can be expressed as $(T \alpha, \alpha)_{\bigwedge^k \C^n} / (\alpha, \alpha)_{\bigwedge^k \C^n}$ for any non-degenerate element $\alpha$ of the $k^{\mathrm{th}}$ exterior power $\bigwedge^k V \subset \bigwedge^k \C^n$ (equipped with the usual Hermitian inner product $(,)_{\bigwedge^k \C^n}$), where the operator $T$ is extended to $\bigwedge^k \C^n$ in the usual fashion.  If $f \in \C^n$ is a unit vector, then the Hodge dual $*f \in \bigwedge^{n-1} \C^n$ is a unit vector in $\bigwedge^{n-1} (f^\perp)$, so that we have the identity
\begin{equation}\label{delta-f}
 \Delta_T(f) = (T(*f), *f)_{\bigwedge^{n-1}\C^n}.
\end{equation}
To prove \eqref{vf}, it thus suffices to establish the more general identity
\begin{equation}\label{gen}
 (f,v)_{\C^n} \Delta_T(v) (v,g)_{\C^n} = (T(*f), (*g))_{\bigwedge^{n-1}\C^n}
\end{equation}
for all $f,g \in \C^n$.  If $f$ is orthogonal to $v$ then $*f$ can be expressed as a wedge product of $v$ with an element of $\bigwedge^{n-2}\C^n$, and hence $T(*f)$ vanishes, so that \eqref{gen} holds in this case.  If $g$ is orthogonal to $v$ then we again obtain \eqref{gen} thanks to the self-adjoint nature of $T$.  Finally, when $f=g=v$ the claim follows from \eqref{delta-f}.  Since the identity \eqref{gen} is sesquilinear in $f,g$, the claim follows.
\end{proof}

Now we can prove \eqref{eq:wts}.  Using translation symmetry we may normalize $\lambda_i(A)=0$.  We apply Lemma \ref{cfeei} to the self-adjoint map $T: w \mapsto Aw$, setting $v$ to be the null vector $v = v_i$ and $f$ to be the standard basis vector $e_j$.  Working in the orthonormal eigenvector basis $v_1,\dots,v_n$ we have $\Delta(v) = \prod_{k=1; k \neq i}^n \lambda_k(A)$; working in the standard basis $e_1,\dots,e_n$ we have $\Delta_T(f) = \mathrm{det}( M_j ) = \prod_{k=1}^{n-1} \lambda_k(M_j)$.  Finally we have $(v,f)_{\C^n} = v_{i,j}$.  The claim follows.

\begin{remark} In coordinates, the identity \eqref{delta-f} may be rewritten as $\Delta_T(f) = f^* \mathrm{adj}(A) f$.  Thus we see that Lemma \ref{cfeei} is basically \eqref{adji} in disguise.  It seems likely that the variant identity in Remark \ref{grin} can also be established in a similar coordinate-free fashion.
\end{remark}

\subsection{Proof using perturbative analysis}\label{perturbative-sec}

Now we give a proof using perturbation theory, which to our knowledge first appears in \cite{Mukherjee1989}.  By the usual limiting argument we may assume that $A$ has simple eigenvalues.  Let $\eps$ be a small parameter, and consider the rank one perturbation $A + \eps e_j e_j^*$ of $A$, where $e_1,\dots,e_n$ is the standard basis.  From \eqref{pa-def} and cofactor expansion, the characteristic polynomial $p_{A+\eps e_j e_j^*}(\lambda)$ of this perturbation may be expanded as
$$ p_{A+\eps e_j e_j^*}(\lambda) = p_A(\lambda) - \eps p_{M_j}(\lambda) + O(\eps^2).$$
On the other hand, from perturbation theory the eigenvalue $\lambda_i( A+\eps e_j e_j^* )$ may be expanded as
$$ \lambda_i( A+\eps e_j e_j^* ) = \lambda_i(A) + \eps |v_{i,j}|^2 + O(\eps^2).$$
If we then Taylor expand the identity
$$ p_{A + \eps e_j e_j^*}( \lambda_i( A+\eps e_j e_j^* ) ) = 0$$
and extract the terms that are linear in $\eps$, we conclude that
$$ \eps |v_{i,j}|^2 p'_A(\lambda_i(A)) - \eps p_{M_j}(\lambda_i(A)) = 0$$
which gives \eqref{eq:wts-alt} and hence \eqref{eq:wts}.

\subsection{Proof using a Cauchy-Binet type formula}\label{cauchy-binet-sec}

Now we give a proof based on a Cauchy-Binet type formula, which is also related to Lemma \ref{cfeei}.  This argument first appeared in \cite{DPTZ}.

\begin{lemma}[Cauchy-Binet type formula]\label{cbtf}  Let $A$ be an $n \times n$ Hermitian matrix with a zero eigenvalue $\lambda_i(A) = 0$.  Then for any $n \times n-1$ matrix $B$, one has
$$ \mathrm{det}( B^* A B ) = (-1)^{n-1} p'_A(0) \left| \mathrm{det} \begin{pmatrix} B & v_i \end{pmatrix} \right|^2 $$
where $\begin{pmatrix} B & v_i \end{pmatrix}$ denotes the $n \times n$ matrix with right column $v_i$ and all remaining columns given by $B$.
\end{lemma}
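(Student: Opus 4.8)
The plan is to diagonalize $A$ in the orthonormal eigenvector basis $v_1,\dots,v_n$, and then reduce the identity to a statement about how the determinant of a Gram-type product $B^* A B$ decomposes. Since $\lambda_i(A)=0$, writing $A = \sum_{k \neq i} \lambda_k(A) v_k v_k^*$ we see that $A = V D V^*$ where $V$ is the unitary matrix with columns $v_1,\dots,v_n$ and $D = \diag(\lambda_1(A),\dots,\lambda_n(A))$ has a zero in the $i$th slot. Then $B^* A B = (V^* B)^* D (V^* B)$, so after replacing $B$ by $V^* B$ (which is harmless, since $V$ is unitary and $\det\begin{pmatrix} B & v_i\end{pmatrix}$ only changes by the unimodular factor $\det V$ when we multiply on the left by $V^*$, and $v_i$ becomes $e_i$), we may assume $A = D$ is diagonal with vanishing $i$th entry and $v_i = e_i$. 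So it suffices to prove $\det(B^* D B) = (-1)^{n-1} p'_A(0) |\det\begin{pmatrix} B & e_i\end{pmatrix}|^2$, where now $p'_A(0) = \prod_{k \neq i}\lambda_k(A)$.

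The main step is the Cauchy--Binet formula itself. Write $D = D^{1/2,*} D^{1/2}$ is not quite available since the $\lambda_k(A)$ need not be nonnegative, so instead I would apply Cauchy--Binet directly to the product of the three rectangular/square matrices $B^* D B$: expanding $\det(B^* D B)$ as a sum over $(n-1)$-element subsets $S \subseteq \{1,\dots,n\}$ of $\left(\prod_{k \in S}\lambda_k(A)\right) |\det B_S|^2$, where $B_S$ is the $(n-1)\times(n-1)$ submatrix of $B$ formed by the rows indexed by $S$. Because $\lambda_i(A)=0$, every term with $i \in S$ vanishes, leaving only the single term $S = \{1,\dots,n\}\setminus\{i\}$, giving $\det(B^* D B) = \left(\prod_{k \neq i}\lambda_k(A)\right)|\det B_{\hat\imath}|^2 = p'_A(0)\,|\det B_{\hat\imath}|^2$, where $B_{\hat\imath}$ is $B$ with the $i$th row deleted.

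It remains to identify $|\det B_{\hat\imath}|^2$ with $|\det\begin{pmatrix} B & e_i\end{pmatrix}|^2$ up to the sign $(-1)^{n-1}$. Expanding $\det\begin{pmatrix} B & e_i\end{pmatrix}$ along its last column, only the $i$th entry of $e_i$ contributes, yielding $(-1)^{i+n}\det B_{\hat\imath}$; taking absolute values squared kills the sign ambiguity and we get exactly $|\det B_{\hat\imath}|^2$. Reconciling the explicit sign $(-1)^{n-1}$ in the statement with the $(-1)^{i+n}$ coming out of the cofactor expansion is a non-issue once one squares, but one should be slightly careful: the cleanest route is to observe that both sides are manifestly independent of reordering (permuting rows of $B$ and the corresponding eigenvalue index), and that the claimed formula is invariant under such permutations, so it suffices to check it with $i = n$, where the cofactor sign is $(-1)^{2n} = 1$ and we must supply the $(-1)^{n-1}$ from the Cauchy--Binet side --- but in fact with $i=n$ both sides already match without any sign, which suggests the $(-1)^{n-1}$ in the statement is absorbing an orientation convention in how $\Delta_T$ or $\det(B^*AB)$ is being normalized, and the honest statement has the sign only to make a later application (relating $\det(B^*AB)$ to the characteristic polynomial $p_{M_j}$ evaluated via $(-1)^{n-1}$) come out right. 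I would simply carry the sign through the cofactor expansion carefully and note that squaring removes all ambiguity except the single global $(-1)^{n-1}$, which is pinned down by testing on the diagonal case as in consistency check (viii).

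The one genuine obstacle is bookkeeping the signs and the unitary change of basis simultaneously without losing track of whether $\det V$ contributes --- but since everything on the right appears inside $|\cdot|^2$, and $|\det V| = 1$, this evaporates. The mathematical content is entirely the single surviving term in Cauchy--Binet, which is forced by $\lambda_i(A) = 0$.
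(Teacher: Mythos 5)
Your approach is genuinely different from the paper's, which proves the lemma perturbatively: it writes down the exact block-matrix identity
\begin{equation*}
\begin{pmatrix} B^* \\ v_i^* \end{pmatrix} (\eps I_n - A) \begin{pmatrix} B & v_i \end{pmatrix}
= \begin{pmatrix} -B^*AB + O(\eps) & O(\eps) \\ O(\eps) & \eps \end{pmatrix},
\end{equation*}
takes determinants, and compares coefficients of $\eps$. Your route --- unitary reduction to the diagonal case followed by a direct three-factor Cauchy--Binet expansion $\det(B^*DB) = \sum_{|S|=n-1} (\prod_{k\in S}\lambda_k(A))\,|\det B_S|^2$ in which the zero eigenvalue kills every term except $S = \{1,\dots,n\}\setminus\{i\}$ --- is clean and valid, and is arguably the more transparent argument given the lemma's name. (The paper's follow-up Remark does reduce to $v_i = e_n$ by a unitary change of variables, as you do, but then proceeds by block computation rather than Cauchy--Binet.)

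However, there is a genuine sign error that then snowballs into the confused discussion at the end of your proposal. You assert ``$p'_A(0) = \prod_{k\neq i}\lambda_k(A)$.'' This is wrong: since $\lambda_i(A) = 0$, the characteristic polynomial factors as $p_A(\lambda) = \lambda \prod_{k\neq i}(\lambda - \lambda_k(A))$, so
\begin{equation*}
p'_A(0) = \prod_{k\neq i}\bigl(0 - \lambda_k(A)\bigr) = (-1)^{n-1}\prod_{k\neq i}\lambda_k(A).
\end{equation*}
With this corrected, your Cauchy--Binet computation $\det(B^*DB) = (\prod_{k\neq i}\lambda_k(A))\,|\det B_{\hat\imath}|^2$ immediately becomes $(-1)^{n-1}p'_A(0)\,|\det B_{\hat\imath}|^2$, and together with the cofactor identity $|\det B_{\hat\imath}|^2 = |\det(B\ \ e_i)|^2$ the lemma follows with nothing left over. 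Your subsequent paragraph speculating that ``the $(-1)^{n-1}$ in the statement is absorbing an orientation convention'' and that with $i=n$ ``both sides already match without any sign'' is not correct --- the sign is forced by the derivative of $p_A$ at a root, not by any normalization of $\Delta_T$ or later application, and it does not disappear for $i=n$. Track $p'_A(0)$ correctly and the last paragraph of your argument can simply be deleted.
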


\begin{proof} We use a perturbative argument related to that in Section \ref{perturbative-sec}.  Since $A v_i = 0$, $v_i^* A = 0$, and $v_i^* v_i = 1$, we easily confirm the identity
$$ \begin{pmatrix} B^* \\ v_i^* \end{pmatrix} (\eps I_n-A) \begin{pmatrix} B & v_i \end{pmatrix} = \begin{pmatrix} -B^* A B +O(\eps)& O(\eps) \\ O(\eps) & \eps \end{pmatrix}$$
for any parameter $\eps$, where the matrix on the right-hand side is given in block form, with the top left block being an $n-1 \times n-1$ matrix and the bottom right entry being a scalar.  Taking determinants of both sides, we conclude that
$$ p_A(\eps) \left| \mathrm{det} \begin{pmatrix} B & v_i \end{pmatrix} \right|^2 = (-1)^{n-1} \mathrm{det}( B^* A B^* ) \eps + O(\eps^2).$$
Extracting out the $\eps$ coefficient of both sides, we obtain the claim.
\end{proof}

\begin{remark}  In the case when $v_i$ is the basis vector $e_n$, we may write $A$ in block form as $A = \begin{pmatrix} M_n & 0_{n-1 \times 1} \\ 0_{1 \times n-1} & 0 \end{pmatrix}$, where $0_{i \times j}$ denotes the $i \times j$ zero matrix, and write $B = \begin{pmatrix} B' \\ x^* \end{pmatrix}$ for some $n-1 \times n-1$ matrix $B'$ and $n-1$-dimensional vector $x$, in which case one can calculate
$$ \mathrm{det}(B^* A B) = \mathrm{det}( (B')^* M_n B' ) = \mathrm{det}(M_n) |\mathrm{det}(B')|^2$$
and
$$ \mathrm{det} \begin{pmatrix} B & v_i \end{pmatrix} = \mathrm{det}(B').$$
Since $p'_A(0) = (-1)^{n-1} \mathrm{det}(M_n)$ in this case, this establishes \eqref{cbtf} in the case $v_i=e_n$.  The general case can then be established from this by replacing $A$ by $UAU^*$ and $B$ by $UB$, where $U$ is any unitary matrix that maps $v_i$ to $e_n$.
\end{remark}

We now prove \eqref{eq:wts-alt} and hence \eqref{eq:wts}.  Using the permutation and translation symmetries we may normalize $\lambda_i(A)=0$ and $j=1$.  If we then apply Lemma \ref{cbtf} with $B = \begin{pmatrix} 0_{1 \times n-1} \\ I_{n-1} \end{pmatrix}$, in which case
$$ \mathrm{det}( B^* A B ) = \mathrm{det}(M_1) = (-1)^{n-1} p_{M_1}(0)$$
and
$$ \mathrm{det} \begin{pmatrix} B & v_i \end{pmatrix} = v_{i,1}.$$
Applying Lemma \ref{cbtf}, we obtain \eqref{eq:wts-alt}.

\subsection{Proof using an alternate expression for eigenvector component magnitudes}

There is an alternate formula for the square $|v_{i,j}|^2$ of the eigenvector component $v_{i,j}$ that was introduced in the paper \cite[(5.8)]{ESY} of Erd\H{o}s, Schlein and Yau in the context of random matrix theory, and then highlighted further in a paper \cite[Lemma 41]{tao2011} of the third author and Vu; it differs from the eigenvector-eigenvalue formula in that it involves the actual coefficients of $A$ and $M_1$, rather than just their eigenvalues.  It was also previously discovered by Gaveau and Schulman \cite[(2.6)]{gaveau}. For sake of notation we just give the formula in the $j=1$ case.

\begin{lemma}[Alternate expression for $v_{i,1}$]\label{vn}  Let $A$ be an $n \times n$ Hermitian matrix written in block matrix form as
$$ A = \begin{pmatrix} a_{11} & X^* \\ X & M_1 \end{pmatrix}$$
where $X$ is an $n-1$-dimensional column vector and $a_{11}$ is a scalar.  Let $i=1,\dots,n$, and suppose that $\lambda_i(A)$ is not an eigenvalue of $M_1$.  Let $u_1,\dots,u_{n-1}$ denote an orthonormal basis of eigenvectors of $M_1$, associated to the eigenvalues $\lambda_1(M_1),\dots,\lambda_{n-1}(M_1)$.  Then
\begin{equation}\label{wts-3}
 |v_{i,1}|^2 = \frac{1}{1 + \sum_{j=1}^{n-1} X^* (\lambda_i(A) I_{n-1} - M_1)^{-2} X}.
\end{equation}
\end{lemma}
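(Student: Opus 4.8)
The plan is to exploit the block structure of $A$ directly in the eigenvalue equation, express the lower $n-1$ components of $v_i$ in terms of its first component $v_{i,1}$, and then impose the unit-norm condition. Write the unit eigenvector in block form as $v_i = \begin{pmatrix} v_{i,1} \\ w \end{pmatrix}$, where $w \in \C^{n-1}$ collects the components $v_{i,2},\dots,v_{i,n}$. Writing out $A v_i = \lambda_i(A) v_i$ in the given block form, the equation splits into the scalar relation $a_{11} v_{i,1} + X^* w = \lambda_i(A) v_{i,1}$ and the vector relation $X v_{i,1} + M_1 w = \lambda_i(A) w$; only the latter will be needed.

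The vector relation rearranges to $(\lambda_i(A) I_{n-1} - M_1) w = v_{i,1} X$. Since by hypothesis $\lambda_i(A)$ is not an eigenvalue of $M_1$, the matrix $\lambda_i(A) I_{n-1} - M_1$ is invertible, and we may solve
$$ w = v_{i,1}\,(\lambda_i(A) I_{n-1} - M_1)^{-1} X. $$
(If $v_{i,1}$ were zero, then $w$ would be a nonzero vector with $M_1 w = \lambda_i(A) w$, forcing $\lambda_i(A)$ to be an eigenvalue of $M_1$; so in fact $v_{i,1} \neq 0$, which is consistent with the right-hand side of \eqref{wts-3} being finite and positive.)

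Now impose $1 = \|v_i\|^2 = |v_{i,1}|^2 + \|w\|^2$. Using the displayed formula for $w$ and the self-adjointness of $(\lambda_i(A) I_{n-1} - M_1)^{-1}$ (valid because $M_1$ is Hermitian and $\lambda_i(A)$ is real), one gets $\|w\|^2 = |v_{i,1}|^2\, X^* (\lambda_i(A) I_{n-1} - M_1)^{-2} X$, so that
$$ |v_{i,1}|^2 \bigl(1 + X^* (\lambda_i(A) I_{n-1} - M_1)^{-2} X\bigr) = 1, $$
which is \eqref{wts-3} after rearrangement. To match the form written in the statement, expand $X$ in the orthonormal eigenbasis $u_1,\dots,u_{n-1}$ of $M_1$ to obtain $X^* (\lambda_i(A) I_{n-1} - M_1)^{-2} X = \sum_{j=1}^{n-1} \frac{|u_j^* X|^2}{(\lambda_i(A) - \lambda_j(M_1))^2}$. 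There is no real obstacle here: the argument is a short manipulation of the block eigenvalue equation plus the normalization, and the only point requiring attention is the invertibility of $\lambda_i(A) I_{n-1} - M_1$, which is precisely the standing hypothesis (and which, as noted, also forces $v_{i,1}\neq 0$).
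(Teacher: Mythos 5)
Your proof is correct and takes essentially the same approach as the paper's: splitting the eigenvalue equation into blocks, solving for the lower block in terms of $v_{i,1}$, and imposing the unit-norm condition. The only cosmetic difference is that the paper first applies the translation symmetry to normalize $\lambda_i(A)=0$, whereas you carry $\lambda_i(A)$ along explicitly.
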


This lemma is useful in random matrix theory for proving \emph{delocalization} of eigenvectors of random matrices, which roughly speaking amounts to proving upper bounds on the quantity $\sup_{1 \leq j \leq n} |v_{i,j}|$.

\begin{proof} One can verify that this result enjoys the same translation symmetry as Theorem \ref{eei} (see consistency check (ii) from the introduction), so without loss of generality we may normalize $\lambda_i(A) = 0$.  If we write $v_i = \begin{pmatrix} v_{i,1} \\ w_i \end{pmatrix}$ for an $n-1$-dimensional column vector $w_i$, then the eigenvector equation $A v_i = \lambda_i(A) v_i = 0$ can be written as the system
\begin{align*}
a_{11} v_{i,1} + X^* w_i &= 0 \\
X v_{i,1} + M_1 w_i &= 0.
\end{align*}
By hypothesis, $0$ is not an eigenvalue of $M_1$, so we may invert $M_1$ and conclude that
$$ w_i = - M_1^{-1} X v_{i,1}.$$
Since $v_i$ is a unit vector, we have $|w_i|^2 + |v_{i,1}|^2 = 1$.  Combining these two formulae and using some algebra, we obtain the claim.
\end{proof}

Now we can give an alternate proof of \eqref{eq:wts-alt} and hence \eqref{eq:wts}.  By permutation symmetry (iii) it suffices to establish the $j=1$ case.  Using limiting arguments as before we may assume that $A$ has distinct eigenvalues; by further limiting arguments we may also assume that the eigenvalues of $M_1$ are distinct from those of $A$.  By translation symmetry (ii) we may normalize $\lambda_i(A) = 0$.  Comparing \eqref{eq:wts-alt} with \eqref{wts-3}, our task reduces to establishing the identity
$$ p'_A(0)  = p_{M_1}(0) (1 + X^* M_1^{-2} X).$$
However, for any complex number $\lambda$ not equal to an eigenvalue of $M_1$, we may apply Schur complementation \cite{cottle} to the matrix
$$ \lambda I_n - A = \begin{pmatrix} \lambda - a_{11} & -X^* \\ -X & \lambda I_{n-1} - M_1 \end{pmatrix} $$
to obtain the formula
$$ \mathrm{det}( \lambda I_n - A ) = \mathrm{det}( \lambda I_{n-1} - M_1 ) (\lambda - a_{11} - X^* (\lambda I_{n-1} - M_1)^{-1} X )$$
or equivalently
$$ p_A(\lambda) = p_{M_1}(\lambda) (\lambda - a_{11} - X^* (\lambda I_{n-1} - M_1)^{-1} X )$$
which on Taylor expansion around $\lambda=0$ using $p_A(0)=0$ gives
$$ p'_A(0) \lambda + O(\lambda^2) = (p_{M_1}(0) + O(\lambda)) ( \lambda - a_{11} + X^* M_1^{-1} X + \lambda X^* M_1^{-2} X + O(\lambda^2)).$$
Setting $\lambda=0$ and using $p_{M_1}(0) \neq 0$, we conclude that $a_{11} + X^* M_1^{-1} X$ vanishes.  If we then extract the $\lambda$ coefficient, we obtain the claim.

\begin{remark}  The same calculations also give the well known fact that the minor eigenvalues $\lambda_1(M_1),\dots,\lambda_{n-1}(M_1)$ are precisely the roots for the equation
$$ \lambda - a_{11} - X^* (\lambda I_{n-1} - M_1)^{-1} X = 0.$$
Among other things, this can be used to establish the interlacing inequalities \eqref{interlace}.
\end{remark}

\subsection{A generalization}

The following generalization of the eigenvector-eigenvalue identity was recently observed\footnote{Variants of this identity have also been recently observed in \cite{chen}, \cite{stawiska}.} by Yu Qing Tang (private communication), relying primarily on the Cauchy-Binet formula and a duality relationship \eqref{duality} between the various minors of a unitary matrix.  If $A$ is an $n \times n$ matrices and $I,J$ are subsets of $\{1,\dots,n\}$ of the same cardinality $m$, let $M_{I,J}(A)$ denote the $n-m \times n-m$ minor formed by removing the $m$ rows indexed by $I$ and the $m$ columns indexed by $J$.

\begin{proposition}[Generalized eigenvector-eigenvalue identity]  Let $A$ be a normal $n \times n$ matrix diagonalized as $A = UDU^*$ for some unitary $U$ and diagonal $D = \mathrm{diag}(\lambda_1,\dots,\lambda_n)$, let $1 \leq m < n$, and let $I,J,K \subset \{1,\dots,n\}$ have cardinality $m$.  Then
$$ \overline{\mathrm{det} M_{J^c,I^c}(U)} (\mathrm{det} M_{K^c,I^c}(U)) \prod_{i \in I, j \in I^c} (\lambda_j - \lambda_i) = \mathrm{det} M_{J,K}( \prod_{i \in I} (A - \lambda_i I_n) )$$
where $I^c$ denotes the complement of $I$ in $\{1,\dots,n\}$, and similarly for $J^c,K^c$.
\end{proposition}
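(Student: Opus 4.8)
The plan is a three-step argument: (1) diagonalize $A$ so that the product $\prod_{i\in I}(A-\lambda_i I_n)$ becomes a diagonal matrix with a controlled vanishing pattern, conjugated by $U$; (2) expand the relevant minor of that product by a Cauchy--Binet formula, which then collapses to a single term; (3) convert the resulting complementary minors of $U$ into the minors appearing in the statement, using the duality relation \eqref{duality} for minors of a unitary matrix. The only genuinely delicate point is the sign and phase bookkeeping in step (3).

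For step (1), write $A = UDU^*$ as in the hypothesis and set $\Delta \coloneqq \prod_{i\in I}(D - \lambda_i I_n)$, the diagonal matrix whose $c^{\mathrm{th}}$ diagonal entry is $\delta_c \coloneqq \prod_{i\in I}(\lambda_c - \lambda_i)$. Since $A - \lambda_i I_n = U(D - \lambda_i I_n)U^*$ for each $i$, we obtain $\prod_{i\in I}(A - \lambda_i I_n) = U\Delta U^*$, and the key observation is that $\delta_c = 0$ whenever $c \in I$ (the factor with $i = c$ being $\lambda_c - \lambda_c = 0$), so $\Delta$ is supported on the $n-m$ diagonal positions indexed by $I^c$. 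For step (2), write $U[S|T]$ for the submatrix of $U$ on rows $S$ and columns $T$ (with $\cdot$ denoting the full index set), so that $\mathrm{det}\, M_{S^c,T^c}(U) = \mathrm{det}\, U[S|T]$ and
$$\mathrm{det}\, M_{J,K}\Big(\prod_{i\in I}(A-\lambda_i I_n)\Big) = \mathrm{det}\big( (U\Delta U^*)[J^c|K^c] \big) = \mathrm{det}\big( U[J^c|\,\cdot\,]\,\Delta\, U^*[\,\cdot\,|K^c]\big).$$
The right-hand side is the determinant of a product of an $(n-m)\times n$ matrix, an $n\times n$ diagonal matrix, and an $n\times(n-m)$ matrix, so the Cauchy--Binet formula gives $\sum_{S}\big(\prod_{c\in S}\delta_c\big)\,\mathrm{det}\, U[J^c|S]\;\overline{\mathrm{det}\, U[K^c|S]}$, the sum over $(n-m)$-element subsets $S$ of $\{1,\dots,n\}$; the two intermediate index sets are forced to coincide because $\Delta$ is diagonal, and the complex conjugate appears because the restriction of $U^*[\,\cdot\,|K^c]$ to rows $S$ is $(U[K^c|S])^*$. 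Since $\Delta$ is supported on $I^c$ and $|I^c| = n-m$, the unique possibly-nonzero term is $S = I^c$ (if moreover $\delta_c = 0$ for some $c \in I^c$, every term vanishes, but then the factor $\lambda_c-\lambda_i$ occurs on both sides and the asserted identity reads $0=0$). Using $\prod_{c\in I^c}\delta_c = \prod_{i\in I,\,j\in I^c}(\lambda_j-\lambda_i)$ and rewriting the two minors of $U$ in the original notation, we arrive at
$$\mathrm{det}\, M_{J,K}\Big(\prod_{i\in I}(A-\lambda_i I_n)\Big) = \Big(\prod_{i\in I,\,j\in I^c}(\lambda_j-\lambda_i)\Big)\,\mathrm{det}\, M_{J^c,I^c}(U)\;\overline{\mathrm{det}\, M_{K^c,I^c}(U)}.$$

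Comparing with the proposition, it remains to pass from $\mathrm{det}\, M_{J^c,I^c}(U)\,\overline{\mathrm{det}\, M_{K^c,I^c}(U)}$ to $\overline{\mathrm{det}\, M_{J^c,I^c}(U)}\,\mathrm{det}\, M_{K^c,I^c}(U)$ — that is, to interchange which of the two minors of $U$ is conjugated — and this is exactly the role of the duality relation \eqref{duality}. Concretely, Jacobi's identity for complementary minors gives, for $|\alpha| = |\beta|$, that $\mathrm{det}\big((U^{-1})[\alpha|\beta]\big) = (-1)^{s(\alpha)+s(\beta)}\,\mathrm{det}\, U[\beta^c|\alpha^c]/\mathrm{det}\, U$, where $s(\cdot)$ denotes the sum of the indices; applying this with $U^{-1}=U^*$ (so that $\mathrm{det}\big((U^*)[\alpha|\beta]\big) = \overline{\mathrm{det}\, U[\beta|\alpha]}$) rewrites each complementary minor $\mathrm{det}\, U[J^c|I^c]$, and $\mathrm{det}\, U[K^c|I^c]$, as an explicit sign times $\overline{\mathrm{det}\, U[J|I]}$, resp.\ $\overline{\mathrm{det}\, U[K|I]}$, times the scalar $\mathrm{det}\, U$, which is unimodular because $U$ is unitary. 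In the product the two copies of $\mathrm{det}\, U$ multiply to a phase of modulus one that cancels, and one is left only with a product of two signs of the form $(-1)^{s(\cdot)+s(\cdot)}$ to account for; carrying this bookkeeping out via the precise statement of \eqref{duality} then yields the identity asserted in the proposition. I expect this last step to be the main obstacle: formulating the duality \eqref{duality} in a form that cleanly matches the minors and conjugates appearing above, and then correctly tracking the accompanying signs $(-1)^{s(\cdot)+s(\cdot)}$ and the unimodular factor $\mathrm{det}\, U$ through the computation. Everything preceding it is a mechanical consequence of the conjugation $A = UDU^*$, the Cauchy--Binet formula, and the vanishing pattern of $\Delta$ on $I$.
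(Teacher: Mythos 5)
Your three-step plan is the paper's own proof: conjugate by $U$ so that $\prod_{i\in I}(A-\lambda_iI_n)=U\Delta U^*$ with $\Delta$ diagonal and supported on $I^c$, apply Cauchy--Binet so that the sum collapses to the single term indexed by $I^c$, and then reduce to the complementary-minor duality \eqref{duality} for the unitary $U$. Steps (1) and (2) are carried out correctly. There is, however, a notational slip in the display you derive from Cauchy--Binet: by your own dictionary $\det M_{S^c,T^c}(U)=\det U[S|T]$, the quantities $\det U[J^c|I^c]$ and $\overline{\det U[K^c|I^c]}$ translate to $\det M_{J,I}(U)$ and $\overline{\det M_{K,I}(U)}$, not to $\det M_{J^c,I^c}(U)$ and $\overline{\det M_{K^c,I^c}(U)}$ (the former are $(n-m)\times(n-m)$ minors, the latter $m\times m$). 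So the remaining work is not a mere interchange of conjugation, but the passage $\det M_{J,I}(U)\,\overline{\det M_{K,I}(U)}\mapsto\overline{\det M_{J^c,I^c}(U)}\,\det M_{K^c,I^c}(U)$, which is exactly the point where the paper invokes \eqref{duality}. Your final paragraph, which reverts to the $U[\cdot|\cdot]$ notation, does target the correct quantities, so this is only a transcription error in the display.

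The substantive gap is precisely the sign bookkeeping you flagged as ``the main obstacle,'' and your caution is well founded. The paper states \eqref{duality} as $\det M_{J,I}(U)=\overline{\det M_{J^c,I^c}(U)}\det U$ with no sign, and proves it by permuting rows and columns to reduce to $I=J=\{1,\dots,m\}$; but that permutation multiplies $\det U$ by the signature of the permutation, which contributes $(-1)^{s(I)+s(J)}$ up to a fixed factor $(-1)^{m(m+1)/2}$ that cancels in the product. The corrected duality is $\det M_{J,I}(U)=(-1)^{s(I)+s(J)}\overline{\det M_{J^c,I^c}(U)}\det U$, in agreement with the signed Jacobi complementary-minor identity you quote. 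Feeding this into the final step, the two copies of $(-1)^{s(I)}$ and of $\det U$ cancel (using $|\det U|=1$), but a net sign $(-1)^{s(J)+s(K)}$ survives, so the argument actually yields $\det M_{J,K}(\prod_{i\in I}(A-\lambda_iI_n))=(-1)^{s(J)+s(K)}\,\overline{\det M_{J^c,I^c}(U)}\det M_{K^c,I^c}(U)\prod_{i\in I,j\in I^c}(\lambda_j-\lambda_i)$, not the unsigned identity in the proposition. You can sanity-check this on $n=2$, $I=J=\{1\}$, $K=\{2\}$: the two sides of the proposition as printed differ by a factor $-1=(-1)^{1+2}$. The $m=1$, $J\neq K$ cases are exactly \eqref{off-diag}, which carries the analogous sign $(-1)^{j+j'}$, and the paper itself says \eqref{off-diag} ``can be interpreted as the remaining $m=1$ cases of this proposition'' --- so the signs in the proposition and in \eqref{duality} as printed appear to be an oversight. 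In short: do not trust that the signs cancel; carrying them through carefully shows that the proposition as stated needs the extra factor $(-1)^{s(J)+s(K)}$ (or, equivalently, a sign-absorbing convention for the minors $M_{I,J}$), and your instinct to treat this step skeptically was exactly right.
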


Note that if we set $m=1$, $I = \{i\}$, and $J=K=\{j\}$, we recover \eqref{eq:wts}.  The identity \eqref{off-diag} can be interpreted as the remaining $m=1$ cases of this proposition.

\begin{proof}  We have 
$$\prod_{i \in I} (A - \lambda_i I_n) = U \prod_{i \in I} (D - \lambda_i I_n) U^*$$
and hence by the Cauchy-Binet formula 
$$\mathrm{det} M_{J,K}( \prod_{i \in I} (A - \lambda_i I_n) ) = \sum_{L,L'} (\mathrm{det} M_{J,L}(U))
(\mathrm{det} M_{L,L'}(\prod_{i \in I} (D - \lambda_i I_n))) (\mathrm{det} M_{L',K}(U^*))$$
where $L,L'$ range over subsets of $\{1,\dots,n\}$ of cardinality $m$.  A computation reveals that the quantity $\mathrm{det} M_{L,L'}(\prod_{i \in I} (D - \lambda_i I_n))$ vanishes unless $L=L'=I$, in which case the quantity equals $\prod_{i \in I, j \in I^c} (\lambda_j - \lambda_i)$.  Thus it remains to show that
$$ \overline{\mathrm{det} M_{J^c,I^c}(U)} (\mathrm{det} M_{K^c,I^c}(U)) = \mathrm{det} M_{J,I}(U) \mathrm{det} M_{I,K}(U^*).$$
Since $\mathrm{det} M_{I,K}(U^*) = \overline{\mathrm{det} M_{K,I}(U)}$, it will suffice to show that\footnote{This identity is also a special case of the more general identity $\mathrm{det} M_{I,J}(\mathrm{adj}(A)) = (\mathrm{det} A)^{m-1} \mathrm{det} M_{J^c,I^c}(A)$ of Jacobi \cite{jacobi}, valid for arbitrary $n \times n$ matrices $A$, as can be seen after noting that $\mathrm{adj}(U) = \mathrm{det}(U)^{-1} U^*$.} 
\begin{equation}\label{duality}
 \mathrm{det} M_{J,I}(U) = \overline{\mathrm{det} M_{J^c,I^c}(U)} \mathrm{det} U
\end{equation}
for any $J,I \subset \{1,\dots,n\}$ of cardinality $m$.  By permuting rows and columns we may assume that $J=I=\{1,\dots,m\}$.  If we split the identity matrix $I_n$ into the left $m$ columns $I_n^1 \coloneqq \begin{pmatrix} I_m \\ 0_{n-m \times m} \end{pmatrix}$ and the right $n-m$ columns $I_n^2 \coloneqq \begin{pmatrix} 0_{m \times n-m} \\ I_{n-m} \end{pmatrix}$ and take determinants of both sides of the identity
$$ U \begin{pmatrix} U^* I_n^1 & I_n^2 \end{pmatrix} = \begin{pmatrix} I_n^1 & UI_n^2 \end{pmatrix}$$
we conclude that
$$ \mathrm{det}(U) \mathrm{det} M_{I^c,J^c}(U^*) = \mathrm{det} M_{J,I}(U)$$
giving the claim.
\end{proof}

\section{History of the identity}\label{history-sec}

In this section we present, roughly in chronological order, all the references to the eigenvector-eigenvalue identity \eqref{eq:wts} (or closely related results) that we are aware of in the literature.  For the primary references, we shall present the identity in the notation of that reference in order to highlight the diversity of contexts and notational conventions in which this identity has appeared.
 
The earliest appearance of identities equivalent to \eqref{eq:wts} that we know of is due to Jacobi \cite[\S 8, (33)]{jacobi}.  In modern notation, Jacobi diagonalizes a symmetric quadratic form $\sum_{\chi=1}^n \sum_{\lambda=1}^n a_{\chi,\lambda} x_\chi x_\lambda$ as $\sum_{m=1}^n G_m (\sum_{i=1}^n \alpha_i^{(m)} x_i)^2$ for an orthogonal matrix $(\alpha_i^{(m)})_{1 \leq i,m \leq n}$, and then the for each $m$ the cofactors $B^{(m)}_{\chi \lambda}$ of the form $\sum_{\chi=1}^n \sum_{\lambda=1}^n a_{\chi,\lambda} x_\chi x_\lambda - G_m \sum_{\chi=1}^n x_\chi^2$ are extracted.  Noting that the columns of this cofactor matrix are proportional to the eigenvector $(\alpha_i^{(m)})_{1 \leq i \leq n}$, Jacobi concluded that
\begin{equation}\label{denom}
(G_1-G_m) \dots (G_n-G_m) \alpha_\chi^{(m)} \alpha_\lambda^{(m)} = B_{\chi\lambda}^{(m)}
\end{equation}
with the factor $G_m-G_m$ omitted from the left-hand side; this is essentially \eqref{adji} for real symmetric matrices. In \cite[\S 8, (36)]{jacobi} an identity essentially equivalent to \eqref{eq:wts-alt} for real symmetric matrices is also given.
 
An identity that implies \eqref{eq:wts} as a limiting case appears a century after Jacobi in a paper of L\"owner \cite[(7)]{Lowner}.  In this paper, a diagonal quadratic form
$$ A_n(x,x) = \sum_{i=1}^n \lambda_i x_i^2$$
is considered, as well as a rank one perturbation
$$ B_n(x,x) = A_n(x,x) + \left( \sum_{i=1}^n \alpha_i x_i \right)^2$$
for some real numbers $\lambda_1,\dots,\lambda_n,\alpha_1,\dots,\alpha_n$.  If the eigenvalues of the quadratic form $B_n$ are denoted $\mu_1,\dots,\mu_n$. If the eigenvalues are arranged in non-decreasing order, one has the interlacing inequalities
$$\lambda_1 \leq \mu_1 \leq \lambda_2 \leq \dots \leq \lambda_n \leq \mu_n$$
(compare with \eqref{interlace}).  Under the non-degeneracy hypothesis
$$\lambda_1 < \mu_1 < \lambda_2 < \dots < \lambda_n < \mu_n$$
the identity
\begin{equation}\label{alpha-k}
 \alpha_k^2 = \frac{(\mu_1-\lambda_k) (\mu_2 - \lambda_k) \dots (\mu_n - \lambda_k)}{(\lambda_1-\lambda_k) (\lambda_2 - \lambda_k) \dots (\lambda_{k-1} - \lambda_k) (\lambda_{k+1} - \lambda_k) \dots (\lambda_n - \lambda_k)}
\end{equation}
is established for $k=1,\dots,n$, which closely resembles \eqref{eq:wts}; a similar formula also appears in \cite[\S 12]{jacobi}.  The identity \eqref{alpha-k} is obtained via ``Eine einfache Rechnung'' (an easy calculation) from the standard relations
$$ \sum_{k=1}^n \frac{\alpha_k^2}{\mu_i - \lambda_k} = 1$$
for $i=1,\dots,n$ (compare with \eqref{la}), after applying Cramer's rule and the Cauchy determinant identity \eqref{cauchy-det}; as such, it is very similar to the proof of \eqref{eq:wts} in Section \ref{cramer-sec} that is also based on \eqref{cauchy-det}.   The identity \eqref{alpha-k} was used in \cite{Lowner} to help classify monotone functions of matrices, and has also been applied to inverse eigenvector problems and stable computation of eigenvectors \cite{GE}, \cite[p. 224-226]{demmel}.  It can be related to \eqref{eq:wts} as follows.  For sake of notation let us just consider the $j=n$ case of \eqref{eq:wts}.  Let $\eps$ be a small parameter and consider the perturbation $e_n e_n^* + \eps A$ of the rank one matrix $e_n e_n^*$.  Standard perturbative analysis reveals that the eigenvalues of this perturbation consist of $n-1$ eigenvalues of the form $\eps \lambda_i(M_n) + O(\eps^2)$ for $i=1,\dots,n-1$, plus an outlier eigenvalue at $1+O(\eps)$.  Rescaling, we see that the rank one perturbation $A + \frac{1}{\eps} e_n e_n^*$ of $A$ has eigenvalues of the form $\lambda_i(M_n) + O(\eps)$ for $i=1,\dots,n-1$, plus an outlier eigenvalue at $\frac{1}{\eps}+O(1)$.  If we let $A_n, B_n$ be the quadratic forms associated to $A, A + \frac{1}{\eps} e_n e_n^*$ expressed using the eigenvector basis $v_1,\dots,v_n$, the identity \eqref{alpha-k} becomes
$$ \frac{1}{\eps} |v_{k,n}|^2 = \frac{\prod_{i=1}^n (\lambda_i( A + \frac{1}{\eps} e_n e_n^* ) - \lambda_k(A))}{\prod_{i=1; i \neq k}^n (\lambda_i(A) - \lambda_k(A))}.$$
Extracting the $1/\eps$ component of both sides of this identity using the aforementioned perturbative analysis, we recover \eqref{eq:wts} after a brief calculation.

A more complicated variant of \eqref{alpha-k} involving various quantities related to the Rayleigh-Ritz method of bounding the eigenvalues of a symmetric linear operator was stated by Weinberger \cite[(2.29)]{weinberger}, where it was noted that it can be proven much the same method as in \cite{Lowner}.  

The first appearance of the eigenvector-eigenvalue identity in essentially the form presented here that we are aware of was by Thompson \cite[(15)]{Thompson:1966}, which does not reference the prior work of L\"owner or Weinberger.  In the notation of Thompson's paper, $A$ is a normal $n \times n$ matrix, and $\mu_1,\dots,\mu_s$ are the \emph{distinct} eigenvalues of $A$, with each $\mu_i$ occurring with multiplicity $e_i$.  To avoid non-degeneracy it is assumed that $s \geq 2$.	One then diagonalizes $A = UDU^{-1}$ for a unitary $U$ and diagonal $D = \mathrm{diag}(\lambda_1,\dots,\lambda_n)$, and then sets
$$ \theta_{i\beta} = \sum_{j: \lambda_j = \mu_\beta} |U_{ij}|^2$$
for $i=1,\dots,n$ and $\beta=1,\dots,n-1$, where $U_{ij}$ are the coefficients of $U$. The minor formed by removing the $i^{\mathrm{th}}$ row and column from $A$ is denoted $A(i|i)$; it has ``trivial'' eigenvalues in which each $\mu_i$ with $e_i>1$ occurs with multiplicity $e_i-1$, as well as additionally some ``non-trivial'' eigenvalues $\xi_{i1},\dots,\xi_{i,s-1}$.  The equation \cite[(15)]{Thompson:1966} then reads
\begin{equation}\label{t66}
 \theta_{i\alpha} = \prod_{j=1}^{s-1} (\mu_\alpha - \xi_{ij}) \prod_{j=1,j \neq \alpha}^s (\mu_\alpha - \mu_j)^{-1}
\end{equation}
for $1 \leq \alpha \leq s$ and $1 \leq i \leq n$.  If one specializes to the case when $A$ is Hermitian with simple spectrum, so that all the multiplicities $e_i$ are equal to $1$, and set $s=n$ and $\mu_i = \lambda_i$, it is then not difficult to verify that this identity is equivalent to the eigenvector-eigenvalue identity \eqref{eq:wts} in this simple spectrum case. In the case of repeated eigenvalues, the eigenvector-eigenvalue identity \eqref{eq:wts} may degenerate (in that the left and right hand sides both vanish), but the identity \eqref{t66} remains non-trivial in this case.  The proof of \eqref{t66} given in \cite{Thompson:1966} is written using a rather complicated notation (in part because much of the paper was concerned with more general $k \times k$ minors rather than the $n-1 \times n-1$ minors $A(i|i)$), but is essentially the adjugate proof from Section \ref{adjugate-sec} (where the adjugate matrix is replaced by the closely related $(n-1)^{\mathrm{th}}$ compound matrix).  In \cite{Thompson:1966}, the identity \eqref{t66} was not highlighted as a result of primary interest in its own right, but was instead employed to establish a large number of inequalities between the eigenvalues $\mu_1,\dots,\mu_s$ and the minor eigenvalues $\xi_{i1},\dots,\xi_{i,s-1}$ in the Hermitian case; see \cite[Section 5]{Thompson:1966}.

In followup paper \cite{Thompson:1968} by Thompson and McEnteggert, the analysis from \cite{Thompson:1966} was revisited, restricting attention specifically to the case of an $n \times n$ Hermitian matrix $H$ with simple eigenvalues $\lambda_1 < \dots < \lambda_n$, and with the minor $H(i|i)$ formed by deleting the $i^{\mathrm{th}}$ row and column having eigenvalues $\xi_{i1} \leq \dots \leq \xi_{i,n-1}$.  In this paper the inequalities
\begin{equation}\label{ineq1}
 \sum_{i=1}^n \frac{\lambda_j - \xi_{i,j-1}}{\lambda_j - \lambda_{j-1}} \frac{\xi_{ij} - \lambda_j}{\lambda_{j+1}-\lambda_j} \geq 1
\end{equation}
and
\begin{equation}\label{ineq2}
 \sum_{i=1}^n \frac{\lambda_j - \xi_{i,j-1}}{\lambda_j - \lambda_1} \frac{\xi_{ij}-\lambda_j}{\lambda_n-\lambda_j} \leq 1
\end{equation}
for $1 \leq j \leq n$ were proved (with most cases of these inequalities already established in \cite{Thompson:1966}), with a key input being the identity
\begin{equation}\label{uij}
|u_{ij}|^2 = \left\{ \frac{\lambda_j - \xi_{i1}}{\lambda_j - \lambda_1} \right\} \dots \left\{ \frac{\lambda_j - \xi_{i,j-1}}{\lambda_j - \lambda_{j-1}} \right\} \left\{ \frac{\xi_{ij}-\lambda_j}{\lambda_{j+1} - \lambda_j} \right\} \dots \left\{ \frac{\xi_{i,n-1}-\lambda_j}{\lambda_{n} - \lambda_j} \right\}
\end{equation}
where $u_{ij}$ are the components of the unitary matrix $U$ used in the diagonalization $H = UDU^{-1}$ of $H$.  Note from the Cauchy interlacing inequalities that each of the expressions in braces takes values between $0$ and $1$. It is not difficult to see that this identity is equivalent to \eqref{eq:wts} (or \eqref{t66}) in the case of Hermitian matrices with simple eigenvalues, and the hypothesis of simple eigenvalues can then be removed by the usual limiting argument.  As in \cite{Thompson:1966}, the identity is established using adjugate matrices, essentially by the argument given in the previous section. However, the identity \eqref{uij} is only derived as an intermediate step towards establishing the inequalities \eqref{ineq1}, \eqref{ineq2}, and is not highlighted as of interest in its own right.  The identity \eqref{t66} was then reproduced in a further followup paper \cite{thompson-iv}, in which the identity \eqref{lkmj} was also noted; this latter identity was also independently observed in \cite{deutsch}.

In the text of \v{S}ilov \cite[Section 10.27]{silov}, the identity \eqref{eq:wts} is established, essentially by the Cramer rule method.  Namely, if $A(x,x)$ is a diagonal real quadratic form on $\R^n$ with eigenvalues $\lambda_1 \geq \dots \geq \lambda_n$, and $R_{n-1}$ is a hyperplane in $\R^n$ with unit normal vector $(\alpha_1,\dots,\alpha_n)$, and $\mu_1 \geq \dots \geq \mu_{n-1}$ are the eigenvalues of $A$ on $R_{n-1}$, then it is observed that 
\begin{equation}\label{alphak2}
\alpha_k^2 = \frac{\prod_{k=1}^{n-1} (\mu_k - \lambda)}{(\lambda_k - \lambda_1) \dots (\lambda_k - \lambda_{k-1}) (\lambda_k - \lambda_{k+1}) \dots (\lambda_k-\lambda_n)}
\end{equation}
for $k=1,\dots,n$, which is \eqref{eq:wts} after changing to the eigenvector basis; identities equivalent to \eqref{la} and \eqref{eq:wts-alt} are also established.  The text \cite{silov} gives no references, but given the similarity of notation with \cite{Lowner} (compare \eqref{alphak2} with \eqref{alpha-k}), one could speculate that \v{S}ilov was influenced by L\"owner's work.

In a section \cite[Section 8.2]{paige_1971} of the PhD thesis of Paige entitled ``A Useful Theorem on Cofactors'', the identity \eqref{uij} is cited as ``a fascinating theorem ... that relates the elements of the eigenvectors of a symmetric to its eigenvalues and the eigenvalues of its principal submatrices'', with a version of the adjugate proof given.  In the notation of that thesis, one considers a $k \times k$ real symmetric tridiagonal matrix $C$ with distinct eigenvalues $\mu_1 > \dots > \mu_k$ with an orthonormal of eigenvectors $y_1,\dots,y_k$.  For any $0 \leq r < j \leq k$, let $C_{r,j}$ denote the $j-r \times j-r$ minor of $C$ defined by taking the rows and columns indexed between $r+1$ and $j$, and let $p_{i,j}(\mu) \coloneqq \mathrm{det}( \mu I_{j-r} - C_{r,j} )$ denote the associated trigonometric polynomial.  The identity
\begin{equation}\label{yri} y_{ri}^2 = p_{0,r-1}(\mu_i) p_{r,k}(\mu_i) / f(i)
\end{equation}
is then established for $i=1,\dots,n$, where $y_{ri}$ is the $i^{\mathrm{th}}$ component of $y_r$ and $f(i) \coloneqq \prod_{r=1; r \neq i}^k (\mu_i-\mu_r)$.  This is easily seen to be equivalent to \eqref{eq:wts} in the case of real symmetric tridiagonal matrices with distinct eigenvalues.  One can then use this to derive \eqref{eq:wts} for more general real symmetric matrices by a version of the Lanczos algorithm for tridiagonalizing an arbitrary real symmetric matrix, followed by the usual limiting argument to remove the hypothesis of distinct eigenvalues; we leave the details to the interested reader.  Returning to the case of tridiagonal matrices, Paige also notes that the method also gives the companion identity
\begin{equation}\label{yri-2}
 f(i) y_{ri} y_{si} = \delta_{r+1} \dots \delta_s p_{0,r-1}(\mu_i) p_{s,k}(\mu_i)
\end{equation}
for $1 \leq r < s \leq k$, where $\delta_2,\dots,\delta_k$ are the upper diagonal entries of the tridiagonal matrix $C$; this can be viewed as a special case of \eqref{off-diag}.  These identities were then used in \cite[Section 8]{paige_1971} as a tool to bound the behavior of errors in the symmetric Lanczos process.

Paige's identities \eqref{yri}, \eqref{yri-2} for tridiagonal matrices are reproduced in the textbook of Parlett \cite[Theorem 7.9.2]{Parlett}, with slightly different notation. Namely, one starts with an $n \times n$ real symmetric tridiagonal matrix $T$, decomposed spectrally as $S \Theta S^*$ where $S = (s_1,\dots,s_n)$ is orthogonal and $\Theta = \mathrm{diag}(\theta_1,\dots,\theta_n)$.  Then for $1 \leq \mu \leq \nu \leq n$ and $1 \leq j \leq n$, the $j^{\mathrm{th}}$ component $s_{\mu j}$ of $s_\mu$ is observed to obey the formula
$$ s_{\mu j}^2= \chi_{1:\mu-1}(\theta_j)  \chi_{\mu+1:n}(\theta_j) / \chi'_{1:n}(\theta_j)$$
when $\theta_j$ is a simple eigenvalue, where $\chi_{i:j}$ is the characteristic polynomial of the $j-i+1 \times j-i+1$ minor of $T$ formed by taking the rows and columns between $i$ and $j$. This identity is essentially equivalent to \eqref{yri}.  The identity \eqref{yri} is similarly reproduced in this notation; much as in \cite{paige_1971}, these identities are then used to analyze various iterative methods for computing eigenvectors.  The proof of the theorem is left as an exercise in \cite{Parlett}, with the adjugate method given as a hint.  Essentially the same result is also stated in the text of Golub and van Loan \cite[p. 432--433]{golub} (equation (8.4.12) on page 474 in the 2013 edition), proven using a version of the Cramer rule arguments in Section \ref{cramer-sec}; they cite as reference the earlier paper \cite[(3.6)]{golub1973}, which also uses essentially the same proof; see also \cite[(4.3.17)]{gladwell} (who cites \cite{BG}, who in turn cite \cite{golub1973}).
A similar result was stated without proof by Galais, Kneller, and Volpe \cite[Equations (6), (7)]{Galais:2011jh}.
They provided expressions for both $|v_{i,j}|^2$ and the off-diagonal eigenvectors as a function of cofactors in place of adjugate matrices.
Their work was in the context of neutrino oscillations.

The identities of Parlett and Golub-van Loan are cited in the thesis of Knyazev \cite[(2.2.27)]{knyazev}, again to analyze methods for computing eigenvalues and eigenvectors; the identities of Golub-van Loan and \v{S}ilov are similarly cited in the paper of Knyazev and Skorokhodov \cite{KS} for similar purposes. Parlett's result is also reproduced in the text of Xu \cite[(3.19)]{xu}.  In the survey \cite[(4.9)]{CG} of Chu and Golub on structured inverse eigenvalue problems, the eigenvector-eigenvalue identity is derived via the adjugate method from the results of \cite{Thompson:1968}, and used to solve the inverse eigenvalue problem for Jacobi matrices; the text \cite{Parlett} is also cited.

In the paper \cite[page 210]{DLNT} of Deift, Li, Nanda, and Tomei, the eigenvector-eigenvalue identity \eqref{eq:wts} is derived by the Cramer's rule method, and used to construct action-angle variables for the Toda flow.  The paper cites \cite{BG}, which also reproduces \eqref{eq:wts} as equation (1.5) of that paper, and in turn cites \cite{golub1973}.

In the paper of Mukherjee and Datta \cite{Mukherjee1989} the eigenvector-eigenvalue identity was rediscovered, in the context of computing eigenvectors of graphs that arise in chemistry.  If $G$ is a graph on $n$ vertices $v_1,\dots,v_n$, and $G - v_r$ is the graph on $n-1$ vertices formed by deleting a vertex $v_r, r=1,\dots,n$, then in \cite[(4)]{Mukherjee1989} the identity
\begin{equation}\label{mit}
 P(G - v_r; x_j) = P'(G; x_j) C^2_{rj}
\end{equation}
is established for $j,r=1,\dots,n$, where $P(G;x)$ denotes the characteristic polynomial of the adjacency matrix of $G$ evaluated at $x$, and $C_{rj}$ is the coefficient at the $r^{\mathrm{th}}$ vertex of the eigenvector corresponding to the $j^{\mathrm{th}}$ eigenvalue, and one assumes that all the eigenvalues of $G$ are distinct.  This is equivalent to \eqref{eq:wts-alt} in the case that $A$ is an adjacency matrix of a graph.  The identity is proven using the perturbative method in Section \ref{perturbative-sec}, and appears to have been discovered independently.  A similar identity was also noted in the earlier work of Li and Feng \cite{LF}, at least in the case $j=1$ of the largest eigenvalue.
In a later paper of Hagos \cite{HAGOS2002103}, it is noted that the identity \eqref{mit} ``is probably not as well known as it should be'', and also carefully generalizes \eqref{mit} to an identity (essentially the same as \eqref{lam}) that holds when some of the eigenvalues are repeated.  An alternate proof of \eqref{mit} was given in the paper of Cvetkovic, Rowlinson, and Simic \cite[Theorem 3.1]{CVETKOVIC2007146}, essentially using the Cramer rule type methods in Section \ref{cramer-sec}. The identity \eqref{lkmj} is also essentially noted at several other locations in the graph theory literature, such as \cite[Chapter 4]{godsil-book}, \cite[Lemma 2.1]{GK}, \cite[Lemma 7.1, Corollary 7.2]{godsil}, \cite[(2)]{GGKL} in relation to the generating functions for walks on a graph, though in those references no direct link to the eigenvector-eigenvalue identity in the form \eqref{eq:wts} is asserted.

In \cite[Section 2]{NTU} the identity \eqref{eq:wts} is derived for normal matrices by the Cramer rule method, citing \cite{thompson-iv}, \cite{deutsch} as the source for the key identity \eqref{lkmj}; the papers \cite{Thompson:1966}, \cite{Thompson:1968} also appear in the bibliography but were not directly cited in this section.  An extension to the case of eigenvalue multiplicity, essentially corresponding to \eqref{lam}, is also given.  This identity is then used to give a complete description of the relations between the eigenvalues of $A$ and of a given minor $M_j$ when $A$ is assumed to be normal.  In \cite{BFdP} a generalization of these results was given to the case of $J$-normal matrices for some diagonal sign matrix $J$; this corresponds to a special case of \eqref{adji-3} in the case where each left eigenvector $w_i$ is the complex conjugate of $Jv_i$.
 
The paper of Baryshnikov \cite{Bary} marks the first appearance of this identity in random matrix theory.  Let $H$ be a Hermitian form on $\C^M$ with eigenvalues $\lambda_1 \geq \dots \geq \lambda_M$, and let $L$ be a hyperplane of $\C^M$ orthogonal to some unit vector $l$.  Let $l_i$ be the component of $l$ with respect to an eigenvector $v_i$ associated to $\lambda_i$, set $w_i \coloneqq |l_i|^2$, and let $\mu_1 \geq \dots \geq \mu_{M-1}$ be the eigenvalues of the Hermitian form arising from restricting $H$ to $L$.  Then after \cite[(4.5.2)]{Bary} (and correcting some typos) the identity
$$ w_i = \frac{\prod_{1 \leq j \leq M-1}(\lambda_i-\mu_j)}{\prod_{1 \leq j \leq M; j \neq i} (\lambda_i-\lambda_j)}$$
is established, by an argument based on Cramer's rule and the Cauchy determinant formula \eqref{cauchy-det}, similar to the arguments at the end of Section \ref{cramer-sec}, and appears to have been discovered independently.  If one specializes to the case when $l$ is a standard basis vector $e_j$ then $l_i$ is also the $e_j$ component of $v_i$, and we recover \eqref{eq:wts} after a brief calculation.  This identity was employed in \cite{Bary} to study the situation in which the hyperplane normal $l$ was chosen uniformly at random on the unit sphere.  This formula was rederived (using a version of the Cramer rule method in Section \ref{cramer-sec}) in the May 2019 paper of Forrester and Zhang \cite[(2.7)]{2019arXiv190505314F}, who recover some of the other results in \cite{Bary} as well, and study the spectrum of the sum of a Hermitian matrix and a random rank one matrix.

In the paper \cite[Lemma 2.7]{DE} of Dumitriu and Edelman, the identity \eqref{yri} of Paige (as reproduced in \cite[Theorem 7.9.2]{Parlett}) is used to give a clean expression for the Vandermonde determinant of the eigenvalues of a tridiagonal matrix, which is used in turn to construct tridiagonal models for the widely studied \emph{$\beta$-ensembles} in random matrix theory.

In the unpublished preprint \cite{2014arXiv1401.4580V} of Van Mieghem, the identity \eqref{eq:wts-alt} is prominently displayed as the main result, though in the notation of that preprint it is expressed instead as
$$ (x_k)_j^2 = -\frac{1}{c'_A(\lambda_k)} \mathrm{det}( A_{\backslash \{j\}} - \lambda_k I_n )$$
for any $j,k=1,\dots,n$, where $A$ is a real symmetric matrix with distinct eigenvalues $\lambda_1,\dots,\lambda_n$ and unit eigenvectors $x_1,\dots,x_n$, $A_{\backslash \{j\}}$ is the minor formed by removing the $j^{\mathrm{th}}$ row and column from $A$, and $c'_A$ is the derivative of the (sign-reversed) characteristic polynomial $c_A(\lambda) = \mathrm{det}(A - \lambda I_n) = (-1)^n p_A(\lambda)$.  Two proofs of this identity are given, one being essentially the Cramer's rule proof from Section \ref{cramer-sec} and attributed to the previous reference \cite{CVETKOVIC2007146}; the other proof is based on Cramer's rule and the Desnanot-Jacobi identity (Dodgson condensation); this identity is used to quantify the effect of removing a node from a graph on the spectral properties of that graph.  The related identity \eqref{wts-3} from \cite{tao2011} is also noted in this preprint.  Some alternate formulae from \cite{vmbook} for quantities such as $(x_k)_j^2$ in terms of walks of graphs are also noted, with the earlier texts \cite{godsil-book}, \cite{golub} also cited.

The identity \eqref{eq:wts} was independently rediscovered and then generalized by Kausel \cite{Kausel2018}, as a technique to extract information about components of a generalized eigenmode without having to compute the entire eigenmode.  Here the generalized eigenvalue problem
$$ \mathbf{K} \psi_j = \lambda_j \mathbf{M} \psi_j$$
for $j=1,\dots,N$ is considered, where $\mathbf{K}$ is a positive semi-definite $N \times N$ real symmetric matrix, $\mathbf{M}$ is a positive definite $N \times N$ real symmetric matrix, and the matrix $\mathbf{\Psi} = (\psi_1 \dots \psi_N)$ of eigenfunctions is normalized so that $\mathbf{\Psi}^T \mathbf{M} \mathbf{\Psi} = \mathbf{I}$.  For any $1 \leq \alpha \leq N$, one also solves the constrained system
$$ \mathbf{K}_\alpha \psi_{j}^{(\alpha)} = \lambda_{\alpha j} \mathbf{M}_\alpha \psi_j^{(\alpha)}$$
where $\mathbf{K}_\alpha, \mathbf{M}_\alpha$ are the $N-1 \times N-1$ minors of $\mathbf{K}, \mathbf{M}$ respectively formed by removing the $\alpha^{\mathrm{th}}$ row and column.  Then in \cite[(18)]{Kausel2018} the Cramer rule method is used to establish the identity
$$ \psi_{\alpha j} = \pm \sqrt{\frac{|\mathbf{M}_\alpha|}{|\mathbf{M}|}} \sqrt{\frac{ \prod_{k=1}^{N-1} (\lambda_j - \lambda_{\alpha k})}{\prod_{k=1; k \neq j}^{N-1} (\lambda_j-\lambda_k)}}$$
for the $\alpha$ component $\psi_{\alpha j}$ of $\psi_j$, where $|\mathbf{M}|$ is the notation in \cite{Kausel2018} for the determinant of $\mathbf{M}$.  Specializing to the case when $\mathbf{M}$ is the identity matrix, we recover \eqref{eq:wts}.

The eigenvector-eigenvalue identity was discovered by three of us \cite{Denton:2019ovn} in July 2019, initially in the case of $3 \times 3$ matrices, in the context of trying to find a simple and numerically stable formula for the eigenvectors of the neutrino oscillation Hamiltonian, which form a separate matrix known as the PMNS lepton mixing matrix.  	This identity was established in the $3 \times 3$ case by direct calculation.  Despite being aware of the related identity \eqref{wts-3}, the four of us were unable to locate this identity in past literature and wrote a preprint \cite{DPTZ} in August 2019 highlighting this identity and providing two proofs (the adjugate proof from Section \ref{adjugate-sec}, and the Cauchy-Binet proof from Section \ref{cauchy-binet-sec}).  The release of this preprint generated some online discussion\footnote{\tt terrytao.wordpress.com/2019/08/13, www.reddit.com/r/math/comments/cq3en0}, and we were notified by Jiyuan Zhang (private communication) of the prior appearance of the identity earlier in the year in \cite{2019arXiv190505314F}.  However, the numerous other places in the literature in which some form of this identity appeared did not become revealed until a popular science article \cite{wolchover-2019} by Wolchover was written in November 2019.  This article spread awareness of the eigenvector-eigenvalue identity to a vastly larger audience, and generated a large number of reports of previous occurrences of the identity, as well as other interesting related observations, which we have attempted to incorporate into this survey.

\section{Further discussion}\label{discuss-sec}

The eigenvector-eigenvalue identity \eqref{eq:wts} only yields information about the magnitude $|v_{i,j}|$ of the components of a given eigenvector $v_i$, but does not directly reveal the phase of these components.  On one hand, this is to be expected, since (as already noted in the consistency check (vii) in the introduction) one has the freedom to multiply $v_i$ by a phase; for instance, even if one restricts attention to real symmetric matrices $A$ and requires the eigenvectors to be real $v_i$, one has the freedom to replace $v_i$ by its negation $-v_i$, so the sign of each component $v_{i,j}$ is ambiguous.  However, \emph{relative} phases, such as the phase of $v_{i,j} \overline{v_{i,j'}}$ are not subject to this ambiguity.  There are several ways to try to recover these relative phases.  One way is to employ the off-diagonal analogue \eqref{off-diag} of \eqref{eq:wts}, although the determinants in that formula may be difficult to compute in general.  For small matrices, it was suggested in \cite{Mukherjee1989} that the signs of the eigenvectors could often be recovered by direct inspection of the components of the eigenvector equation $A v_i = \lambda_i(A) v_i$.  In the application in \cite{Denton:2019ovn}, the additional phase could be recovered by a further neutrino specific identity \cite{Toshev:1991ku}.  For more general matrices, one way to retrieve such phase information is to apply \eqref{eq:wts} in multiple bases.  For instance, suppose $A$ was real symmetric and the $v_{i,j}$ were all real.  If one were to apply the eigenvector-eigenvalue identity after changing to a basis that involved the unit vector $\frac{1}{\sqrt{2}} (e_j + e_{j'})$, then one could use the identity to evaluate the magnitude of $\frac{1}{\sqrt{2}} ( v_{i,j} + v_{i,j'} )$.  Two further applications of the identity in the original basis would give the magnitude of $v_{i,j}, v_{i,j'}$, and this is sufficient information to determine the relative sign of $v_{i,j}$ and $v_{i,j'}$.  We also remark that for real symmetric matrices that are acyclic (such as weighted adjacency matrices of graphs that do not contain loops), one can write down explicit formulae for the coefficients of eigenvectors (and not just their magnitudes) in terms of characteristic polynomials of minors; see \cite{BK}.  We do not know of any direct connection between such formulae and the eigenvector-eigenvalue identity \eqref{eq:wts}.

For large unstructured matrices, it does not seem at present that the identity \eqref{eq:wts} provides a competitive algorithm to compute eigenvectors.  Indeed, to use this identity to compute all the eigenvector component magnitudes $|v_{i,j}|$, one would need to compute all $n-1$ eigenvalues of each of the $n$ minors $M_1,\dots,M_n$, which would be a computationally intensive task in general; and furthermore, an additional method would then be needed to also calculate the signs or phases of these components.  However, if the matrix is of a special form (such as a tridiagonal form), then the identity could be of more practical use, as witnessed by the uses of this identity (together with variants such as \eqref{yri-2}) in the literature to control the rate of convergence for various algorithms to compute eigenvalues and eigenvectors of tridiagonal matrices.  Also, as noted recently in \cite{Kausel2018}, if one has an application that requires only the component magnitudes $|v_{1,j}|,\dots,|v_{n,j}|$ at a single location $j$, then one only needs to compute the characteristic polynomial of a single minor $M_j$ of $A$ at a single value $\lambda_i(A)$, and this may be more computationally feasible.

\section{Sociology of science issues}

As one sees from Section \ref{history-sec} and Figure \ref{fig:graph}, there was some partial dissemination of the eigenvector-eigenvalue identity amongst some mathematical communities, to the point where it was regarded as ``folklore'' by several of these communities.  However, this process was unable to raise broader awareness of this identity, resulting in the remarkable phenomenon of multiple trees of references sprouting from independent roots, and only loosely interacting with each other.  For instance, as discussed in the previous section, for two months after the release of our own preprint \cite{DPTZ}, we only received a single report of another reference \cite{2019arXiv190505314F} containing a form of the identity, despite some substantial online discussion and the dozens of extant papers on the identity.  It was only in response to the popular science article \cite{wolchover-2019} that awareness of the identity finally ``went viral'', leading to what was effectively an \emph{ad hoc} crowdsourced effort to gather all the prior references to the identity in the literature.    
While we do not know for certain why this particular identity was not sufficiently well known prior to these recent events, we can propose the following possible explanations:

\begin{enumerate}
\item \emph{The identity was mostly used as an auxiliary tool for other purposes}.  In almost all of the references discussed here, the eigenvector-eigenvalue identity was established only in order to calculate or bound some other quantity; it was rarely formalized as a theorem or even as a lemma.  In particular, with a few notable exceptions such as the preprint \cite{2014arXiv1401.4580V}, this identity would not be mentioned in the title, abstract, or even the introduction.  In a few cases, the identity was reproven by authors who did not seem to be fully aware that it was already established in one of the references in their own bibliography.
\item \emph{The identity does not have a standard name, form, or notation, and does not involve uncommon keywords}.  As one can see from Section \ref{history-sec}, the identity comes in many variants and can be rearranged in a large number of ways; furthermore, the notation used for the various mathematical objects appearing in the identity vary greatly depending on the intended application, or on the authors involved.  Also, none of the previous references attempted to give the identity a formal name, and the keywords used to describe the identity (such as ``eigenvector'' or ``eigenvalue'') are in extremely common use in mathematics.  As such, there are no obvious ways to use modern search engines to locate other instances of this identity, other than by manually exploring the citation graph around known references to that identity.  Perhaps a ``fingerprint database'' for identities \cite{BT} would be needed before such automated searches could become possible.
\item \emph{The field of linear algebra is too mature, and its domain of applicability is too broad}.  The vast majority of consumers of linear algebra are not domain experts in linear algebra itself, but instead use it as a tool for a very diverse array of other applications.  As such, the diffusion of linear algebra knowledge is not guided primarily by a central core of living experts in the field, but instead relies on more mature sources of authority such as textbooks and lectures.  Unfortunately, only a small handful of linear algebra textbooks mention the eigenvector-eigenvalue identity, thus preventing wider dissemination of this identity.
\end{enumerate}

Online discussion forums for mathematics were only partially successful in disseminating this identity.  For instance, the 2012 MathOverflow question\footnote{{\tt mathoverflow.net/questions/96190}} ``Cramer's rule for eigenvectors'', which inquired as to the existence of an eigenvector identity such as \eqref{eq:wts}, received nearly ten thousand views, but only revealed the related identity in Lemma \ref{vn}.  Nevertheless, this post was instrumental in bringing us four authors together to produce the preprint \cite{DPTZ}, via a comment\footnote{{\tt www.reddit.com/r/math/comments/ci665j/linear\_algebra\_question\_from\_a\_physicist/ev22xgp/}} on a Reddit post by one of us.

It is not fully clear to us how best to attribute authorship for the eigenvector-eigenvalue identity \eqref{eq:wts}.  A variant of the identity was observed by Jacobi \cite{jacobi}, but not widely propagated.  An identity that implies \eqref{eq:wts} was later given by L\"owner \cite{Lowner}, but the implication is not immediate, and this reference had only a modest impact on the subsequent literature.  The paper of Thompson \cite{Thompson:1966} is the first  place we know of in which the identity explicitly appears, and it was propagated through citations into several further papers in the literature; but this did not prevent the identity from then being independently rediscovered several further times, such as in the text \cite{golub} (with the latter restricting attention to the case of tridiagonal matrices).  Furthermore, we are not able to guarantee that there is not an even earlier place in the literature where some form of this identity has appeared.  We propose the name ``eigenvector-eigenvalue identity'' for \eqref{eq:wts} on the grounds that it is descriptive, and hopefully is a term that can be detected through search engines by researchers looking for identities of this form.


Although, in this survey we have included approximately 50 references that mention some variant of the eigenvector-eigenvalue identity, in most cases the identity does not explicitly appear in a form such as  \eqref{eq:wts} that specifically links eigenvector component magnitudes of an arbitrary Hermitian matrix to eigenvalues of the matrix and its minors; exceptions include the papers \cite{Thompson:1966} , \cite{Thompson:1968}, \cite{NTU}, and (in the special case of tridiagonal matrices) \cite{golub}, \cite{xu}.  To convert the other forms of the identity appearing in the literature to a form similar to  \eqref{eq:wts} requires a small but non-zero amount of additional work (such as a change of basis, passing to a limit, or expressing a characteristic polynomial or determinant in terms of eigenvalues).  This may well be an additional factor that has prevented this identity from being more widely known until recently.

\section{Acknowledgments}\label{ack}

We thank Asghar Bahmani, Carlo Beenakker, Adam Denchfield, Percy Deift, Laurent Demanet, Alan Edelman, Chris Godsil, Aram Harrow, James Kneller, Andrew Knysaev, Manjari Narayan, Michael Nielsen, Ma{\l}gorzata Stawiska, Karl Svozil, Gang Tian, Carlos Tomei, Piet Van Mieghem, Brad Willms, Fu Zhang, Jiyuan Zhang, and Zhenzhong Zhang for pointing out a number of references where some variant of the eigenvector-eigenvalue identity has appeared in the literature, or suggesting various extensions and alternate proofs of the identity.  We also thank Darij Grinberg, Andrew Krause, Kristoffer Varholm, Jochen Voss and Jim Van Zandt for some corrections to previous versions of this manuscript.

\bibliographystyle{alpha_mod}
\bibliography{EigenSurvey}

\end{document}